\newcommand{\COLORON}{1}
\newcommand{\NOTESON}{1}
\newcommand{\Debug}{0}
\newcommand{\comment}[1]{}
\newcommand{\COMMENT}[1]{}
\definecolor{darkgray}{rgb}{0.3,0.3,0.3}
\newcommand{\defi}[1]{{\color{darkgray}\emph{#1}}}
\newtheorem{proposition}{Proposition}[section]
\newtheorem{definition}[proposition]{Definition}
\newtheorem{theorem}[proposition]{Theorem}
\newtheorem{lemma}[proposition]{Lemma}
\newtheorem{conjecture}{{Conjecture}}[section]
\newtheorem{question}[conjecture]{Question}
\newtheorem{examp}[proposition]{Example}
\newtheorem*{remark}{Remark}
\newcommand{\FIG}{0}
\newcommand{\note}[1]{ 

	\ 

	{\color{blue} \hspace*{-60pt} NOTE: \color{blue}{\small  \tt \begin{minipage}[c]{1.1\textwidth}  #1 \end{minipage} \ignorespacesafterend }} 
	
	\ 
	
	}
\else \newcommand{\note}[1]{} \fi
\newcommand{\afsubm}[1]{ \ifnum \Debug = 1 {\mymargin{#1}}
\fi} 
\renewcommand{\color}[1]{}
\newcommand{\N}{\ensuremath{\mathbb N}}
\newcommand{\cb}{\ensuremath{\mathcal B}}
\newcommand{\cc}{\ensuremath{\mathcal C}}
\newcommand{\cg}{\ensuremath{\mathcal G}}
\newcommand{\ch}{\ensuremath{\mathcal H}}
\newcommand{\cl}{\ensuremath{\mathcal L}}
\newcommand{\cs}{\ensuremath{\mathcal S}}
\newcommand{\eps}{\ensuremath{\epsilon}}
\newcommand{\isom}{\cong}
\newcommand{\seq}[1]{\ensuremath{(#1_n)_{n\in\N}}} 
\newcommand{\sseq}[2]{\ensuremath{(#1_i)_{i\in #2}}} 
\newcommand{\ceil}[1]{\ensuremath{\left\lceil #1 \right\rceil}}
\newcommand{\Aut}{\operatorname{Aut}}
\renewcommand{\Pr}{\mathbb{P}}
\newcommand{\Tr}[1]{Theorem~\ref{#1}}
\newcommand{\Sr}[1]{Section~\ref{#1}}
\newcommand{\Cnr}[1]{Con\-jecture~\ref{#1}}
\renewcommand{\iff}{if and only if}
\newcommand{\fe}{for every}
\newcommand{\st}{such that}
\newcommand{\wrt}{with respect to}
\newcommand{\labtequ}[2]{ \begin{equation} \label{#1} 	\begin{minipage}[c]{0.9\textwidth}  #2 \end{minipage} \ignorespacesafterend \end{equation} }
\newcommand{\mymargin}[1]{
  \marginpar{%
    \begin{minipage}{\marginparwidth}\small%
      \begin{flushleft}%
        {\color{blue}#1}%
      \end{flushleft}%
   \end{minipage}%
  }%
}%
\newcommand{\mySection}[2]{}
\newcommand{\Ge}{\mathcal{G}}
\newcommand{\Ce}{\mathcal{C}}
\newcommand{\Be}{\mathcal{B}}
\newcommand{\Le}{\mathcal{L}}
\title{Limits of subcritical random graphs and random graphs with excluded minors}
\date{}
\begin{document}

\author[1]{Agelos Georgakopoulos\thanks{Supported by EPSRC grant EP/L002787/1, and by the European Research Council (ERC) under the European Union’s Horizon 2020 research and innovation programme (grant agreement No 639046).}}
 
\author[2]{Stephan Wagner\thanks{Supported by the National Research Foundation of South Africa, grant number 96236.}}
\affil[1]{{Mathematics Institute}\\
 {University of Warwick}\\
  {CV4 7AL, UK}\\
}
\affil[2]{Department of Mathematical Sciences\\ Stellenbosch University\\ Private Bag X1, Matieland 7602, South Africa\\ \texttt{swagner@sun.ac.za}}
\maketitle

\begin{abstract}
We prove local convergence results for the uniformly random, labelled or unlabelled, graphs from subcritical families. As an example special case, we prove Benjamini-Schramm convergence for the uniform random unlabelled tree.

We introduce a compactification of the space of countable (connected) rooted graphs, and use it to generalise the notion of Benjamini-Schramm convergence in order to allow for vertices of infinite degree in the limit object.
\end{abstract}

{\bf Keywords:} Benjamini-Schramm convergence, subcritical class, outerplanar graph, series-parallel graph, analytic combinatorics, compactification.

\section{Introduction}

We prove convergence results for the uniform random graphs from subcritical families, and conjecture generalisations for minor-closed families.

Subcriticality is defined by a technical condition involving generating functions, which we recall in \Sr{secSubcrit} after an overview of known results. Important examples of subcritical  classes include cacti, outerplanar graphs and series-parallel graphs. These example classes can also be characterised via forbidden minors. A well-known conjecture of Noy \cite{Noy} states that an addable, minor-closed class is subcritical, if and only if it has a planar forbidden minor, but we disprove this conjecture in a follow-up paper \cite{AntiNoy}. Loosely speaking, subcritical families are thought to be `tree-like', and indeed we prove that their Benjamini-Schramm limits --- \defi{BS} for short, see Section~\ref{SecBS} for the definition--- are very similar to that of random trees. We now summarise our results.

\begin{theorem}[see also Theorem~\ref{thm:subcritical_labelled_almostsure}]\label{thmIntroLabel}
	Let $\Ce$ be a subcritical family of labelled connected graphs, and $R_n$ a uniformly random element of $\Ce$ with $n$ vertices. 
	Then the sequence $R_1,R_2,\ldots$ converges almost surely in the Benjamini-Schramm sense.
\end{theorem}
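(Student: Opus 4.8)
The plan is to reduce the statement to the almost sure convergence of finitely supported neighbourhood densities, and then to establish the latter by the moment method combined with the block decomposition that subcriticality controls. Fix a radius $r\in\N$ and a finite rooted graph $H$, and for each $n$ let $X_n^H$ denote the number of vertices $v$ of $R_n$ whose $r$-ball $B_{R_n}(v,r)$ is isomorphic (as a rooted graph) to $H$, so that the random empirical neighbourhood measure assigns mass $X_n^H/n$ to $H$. Since there are only countably many pairs $(r,H)$, and weak convergence of measures on the space of rooted graphs is determined by convergence on this countable convergence-determining family, it suffices to produce a deterministic limit $p(H)$ with $X_n^H/n \to p(H)$ almost surely for every $r$ and $H$; the limit measure $\mu$ is then defined by $\mu(H)=p(H)$, and a countable intersection of probability-one events yields almost sure weak convergence.

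First I would identify the candidate limit and compute the expected densities. Every connected graph decomposes into its $2$-connected blocks, glued along cut vertices in a tree-like fashion; subcriticality is precisely the condition ensuring that the dominant singularity of the generating function $C$ of $\Ce$ is inherited from this tree-like composition rather than from the blocks, which forces the block sizes to have exponentially small tails. Consequently the neighbourhood of a uniformly chosen vertex is, in the limit, governed by a multitype Galton--Watson branching process whose types are the (small) blocks, and I take $\mu$ to be the law of the resulting infinite rooted random graph, setting $p(H):=\mu(\{B(\rho,r)\cong H\})$. To show $\Ex[X_n^H]/n\to p(H)$, I would mark a vertex of prescribed $r$-ball: the class of graphs in $\Ce$ carrying such a marked vertex has an (exponential) generating function obtained from $C$ by a local substitution at the root block, and singularity analysis (the transfer theorems of Flajolet--Sedgewick applied at the square-root singularity) gives $\Ex[X_n^H] = p(H)\,n + o(n)$.

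To pass from the first moment to almost sure convergence I would use concentration. Computing $\Ex[(X_n^H)^2]$ amounts to marking an ordered pair of vertices $u,v$ with prescribed $r$-balls; the contribution of pairs at distance at most $2r$ is $O(n)$ and hence negligible after dividing by $n^2$, while for pairs at distance exceeding $2r$ the two balls lie in essentially disjoint parts of the block-tree and, using the exponential tails on block sizes, become asymptotically independent. This yields $\Ex[(X_n^H)^2]/n^2\to p(H)^2$, so that $\mathrm{Var}(X_n^H/n)\to 0$ and $X_n^H/n\to p(H)$ in probability. For the almost sure statement I would strengthen this to a quantitative tail bound, either by sharpening the variance estimate to $O(1/n)$ and combining it with a bounded-differences (McDiarmid/Azuma) inequality applied to a Boltzmann-sampler representation of $R_n$, in which the randomness splits into weakly dependent blocks, so as to obtain $\Pr(|X_n^H/n - p(H)|>\varepsilon)\le e^{-c(\varepsilon)\,n}$ and hence almost sure convergence by Borel--Cantelli.

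I expect the main obstacle to be exactly this concentration step: controlling the two-point function, that is, proving that the $r$-neighbourhoods of two independently uniform vertices decorrelate, and doing so quantitatively enough to obtain almost sure rather than merely in-probability convergence. This is where the subcriticality hypothesis must be used in full strength, through the exponential tail bounds on the block sizes and on the largest block, to guarantee that two typical vertices are separated in the block-tree by many distinct blocks and that no single block carries a positive fraction of the mass.
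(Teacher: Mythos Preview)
Your overall reduction is sound and matches the paper's: show that for each fixed ``local pattern'' the proportion of vertices realising it converges almost surely to a deterministic constant, then take a countable intersection. Where your proposal diverges from the paper is in \emph{which} local parameter is tracked and \emph{how} concentration is obtained, and this is exactly the point you flag as the main obstacle.

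The paper does not count vertices with a prescribed $r$-ball. Instead it counts occurrences of a fixed rooted graph $H$ as a \emph{fringe subgraph} (a pendant piece hanging off a cutvertex, with the root of $H$ belonging to a single block). This is the right object for the block decomposition: the fringe count $F_H$ is an additive parameter that satisfies a clean functional equation for the bivariate generating function
\[
C_H^{\bullet}(z,u)=z\exp\Big(B'\big(C_H^{\bullet}(z,u)\big)+(u-1)\tfrac{z^{|H|}}{|H|!}\Big).
\]
Subcriticality then puts one squarely in the quasi-power regime (Drmota's Theorem~2.21/2.22), which delivers a Gaussian tail bound
\[
\Pr\big(|F_H(R_n^{\bullet})-\mu_H n|\ge t\sqrt{n}\big)\le c_1 e^{-c_2 t^2}
\]
directly, with no separate second-moment or bounded-differences argument needed. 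Taking $t=n^{1/4}$ and applying Borel--Cantelli gives $F_H/n\to\mu_H$ almost surely; the translation from fringe counts to the link-chain description of the limit is then a bookkeeping step.

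Your route via $r$-balls, second moments, and an Azuma inequality on a Boltzmann sampler is not wrong in spirit, but each piece is genuinely harder. The $r$-ball of a vertex does not factor through the block tree as cleanly as a fringe subgraph does, so the bivariate generating function is messier. A variance bound of order $O(1/n)$ for $X_n^H/n$ gives only $\Pr(|X_n^H/n-p(H)|>\varepsilon)=O(1/n)$ via Chebyshev, which is not summable, so you would indeed need the exponential tail you mention. But the Boltzmann-sampler/McDiarmid step is delicate: the sampler produces a graph of \emph{random} size, and conditioning on size exactly $n$ destroys the independence structure you need for bounded differences; making this rigorous would require a contiguity or local-limit argument that is at least as much work as the paper's analytic approach. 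The paper's use of fringe subgraphs plus the ready-made quasi-power machinery is what closes the gap you correctly identified.
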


We remark that if we remove the phrase `almost surely' we obtain a weaker statement which we prove as a stepping stone  in Theorem~\ref{thm:subcritical_labelled}, which also provides a description of the limit object of Theorem~\ref{thmIntroLabel}.


When $\Ce$ is the class of trees, this weaker statement is a well-known fact that can be traced back to Grimmett \cite{GriRan}. (We do not know a reference for the almost sure convergence for trees.)

\medskip

With additional work, we also prove the analogous statements for unlabelled graphs:

\begin{theorem}[see also Theorem~\ref{thm:subcritical_unlabelled_almostsure}]\label{thmIntroUnlab}
	Let $\Ce$ be a subcritical family of unlabelled connected graphs, and $R_n$ a uniformly random element of $\Ce$ with $n$ vertices. 
	Then the sequence $R_1,R_2,\ldots$ converges almost surely in the Benjamini-Schramm sense.
\end{theorem}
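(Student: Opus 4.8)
The plan is to run the same strategy as in the labelled case (Theorem~\ref{thmIntroLabel}), replacing exponential generating functions by ordinary ones and tracking symmetries through P\'olya's cycle-index machinery. Recall first that, for the space of rooted graphs set up in Section~\ref{SecBS}, Benjamini--Schramm convergence of a sequence of finite graphs is equivalent to convergence, for every finite rooted graph $H$ and every radius $r$, of the empirical frequency
\[
X_{n,H,r} := \frac{1}{n}\,\#\{v : B_r(R_n,v)\cong H\}
\]
of vertices whose $r$-ball is isomorphic to $H$. Since that space is Polish and its local topology is generated by the countable family of (locally constant, hence continuous) indicator functionals underlying the $X_{\cdot,H,r}$, it suffices to prove that for each fixed pair $(H,r)$ we have $X_{n,H,r}\to \mu(H,r)$ almost surely, for a suitable deterministic constant $\mu(H,r)$; intersecting the countably many almost-sure events then yields almost-sure weak convergence of the empirical measures to the fixed limit $\mu$.

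First I would identify the limit and prove convergence in expectation. A subcritical connected graph is organised by its block-cut tree: it consists of $2$-connected blocks glued at cut vertices, and subcriticality guarantees that the block generating function has strictly larger radius of convergence than the whole class, so that a uniformly random large graph looks, around a typical vertex, like a critical multitype branching process of small blocks. Running this decomposition through the \emph{unlabelled} generating functions, one writes $\#\{(G,v): G\in\Ce,\ |G|=n,\ B_r(G,v)\cong H\}$ as a coefficient of an explicit generating function obtained by substitution into the cycle indices of the block class. The characteristic square-root singularity of subcritical classes then gives, by singularity analysis,
\[
\Ex[X_{n,H,r}] = \frac{\#\{(G,v):\,|G|=n,\ B_r(G,v)\cong H\}}{n\,\tilde c_n}\longrightarrow \mu(H,r),
\]
where $\tilde c_n=\#\{G\in\Ce:|G|=n\}$ and $\mu(H,r)$ is the probability that the $r$-ball about the root of the limiting branching structure is isomorphic to $H$. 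This is the unlabelled counterpart of the description underlying Theorem~\ref{thm:subcritical_labelled}, and the bookkeeping needed to substitute into the cycle indices and to verify that this substitution preserves the square-root singularity is the first place where the unlabelled case costs more work.

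To upgrade convergence in expectation to almost-sure convergence, I would establish exponential concentration of $X_{n,H,r}$ about its mean. The natural tool is a bounded-differences (Azuma/McDiarmid) inequality applied to an encoding of the random unlabelled graph as a sequence of near-independent increments --- for instance via a P\'olya--Boltzmann sampler or the conditioned branching process of blocks --- so that resampling one increment changes $X_{n,H,r}$ by $O(s/n)$, where $s$ is the size of the affected block. Subcriticality provides exponential tails on block sizes, which lets one truncate the rare large increments and obtain a bound of the form
\[
\Pr\bigl(|X_{n,H,r}-\Ex X_{n,H,r}|\ge \eps\bigr)\le C\,e^{-c\eps^2 n}.
\]
As this bound is summable in $n$, the Borel--Cantelli lemma together with $\Ex X_{n,H,r}\to\mu(H,r)$ yields $X_{n,H,r}\to\mu(H,r)$ almost surely, and the reduction of the first paragraph completes the proof.

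The main obstacle is the symmetry bookkeeping peculiar to the unlabelled setting. One cannot simply transport the labelled result of Theorem~\ref{thmIntroLabel}, because forgetting labels reweights each shape $G$ by $1/|\Aut(G)|$, and the resulting local limit genuinely differs from the labelled one already for trees; an intrinsic unlabelled analysis is therefore unavoidable. Concretely, one must check both that passing from products of exponential generating functions to substitutions into cycle-index series does not destroy the subcritical square-root singularity (so that the expectation asymptotics go through) and that the chosen encoding of the unlabelled random graph still decomposes into genuinely independent, exponentially small pieces (so that the concentration estimate applies). Controlling the automorphisms --- showing that they are rare and small enough to distort neither the limit $\mu(H,r)$ nor the bounded-differences bound --- is the technically delicate heart of the argument, and is exactly the additional work that the unlabelled case demands over the labelled one.
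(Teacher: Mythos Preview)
Your proposal is a reasonable route, but it diverges from the paper's (implied) argument in two places.

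First, on \emph{what} is counted. You work directly with $r$-balls $B_r(R_n,v)$. The paper instead counts \emph{fringe subgraphs}: occurrences of a fixed rooted graph $H$ (whose root lies in a single block) hanging off a cutvertex. These are the natural objects for a block-stable class because their count satisfies a clean functional equation (see~\eqref{eq:funct_eq_CH} in the labelled case and the cycle-index analogue in the proof of Theorem~\ref{thm:subcritical_unlabelled_BS}). The link between fringe counts and BS-convergence is then made through the events $\mathcal{S}(L_1,\dots,L_k)$ of Lemma~\ref{lem:blockshape_unlabelled}: a chain of $2$-ended links with source at the randomly chosen vertex is exactly a fringe subgraph rooted at the sink of $L_k$, so controlling fringe-subgraph frequencies pins down the $k$-block neighbourhood of a uniform vertex.

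Second, on \emph{how} concentration is obtained. You propose Azuma/McDiarmid applied to a P\'olya--Boltzmann or conditioned-branching encoding. The paper instead stays entirely within analytic combinatorics: the bivariate (cycle-index) generating function tracking the fringe-subgraph count has, by subcriticality, a square-root singularity uniformly in the auxiliary variable $u$ near $1$, which puts the count into the quasi-power regime. One then quotes the large-deviation bound \cite[Theorem~2.22]{Drmota2009Random} to get an exponential tail estimate of the form~\eqref{eq:tailbound}, and Borel--Cantelli finishes exactly as you do. No sampler and no martingale are needed.

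Your approach can in principle be made to work --- it is close to Stufler's probabilistic viewpoint in~\cite{StuRan} --- but the step ``resampling one increment changes $X_{n,H,r}$ by $O(s/n)$'' hides real work: once you condition a Boltzmann sample on having size exactly $n$, the pieces are no longer independent, and bounded differences does not apply directly. The paper's analytic route sidesteps this entirely, at the price of having to verify the uniform square-root singularity for the unlabelled bivariate series. The paper itself only says ``we omit the technical details'' at Theorem~\ref{thm:subcritical_unlabelled_almostsure}, but the intended argument is the unlabelled transcription of Steps~1--5 in the proof of Theorem~\ref{thm:subcritical_labelled_almostsure}, with the generating-function machinery of Section~\ref{secSubcrit_unlabelled} in place of exponential generating functions.
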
 

The limit random graph is described in Theorem~\ref{thm:subcritical_unlabelled_BS}.
A special case of this is that the uniform unlabelled tree BS-converges. This was independently proved by Stufler \cite{StuCon}.

We also prove the corresponding statement for unlabelled rooted graphs (Section~\ref{secSubcrit_unlabelled}). It turns out that the BS-limit for unrooted unlabelled graphs is not the same as the weak limit of rooted unlabelled graphs, see Theorems~\ref{thm:subcritical_unlabelled_rooted} and~\ref{thm:subcritical_unlabelled_BS}\footnote{As we mention below, the weak convergence of uniformly random labelled graphs and rooted unlabelled graphs from subcritical classes was independently obtained by Stufler \cite{StuRan}; the BS-convergence of (unrooted) unlabelled graphs is new.}.

\medskip
It is natural to conjecture that such convergence results hold for the uniform random graph on $n$ vertices from any minor-closed class. This is however not always true: let for example $\Ce$ be the class of `apex forests', i.e.\ the graphs $G$ such that for some vertex $x\in V(G)$, the subgraph $G - x$ is a forest. This is a minor-closed class: it can be characterised by forbidding the disjoint union of two triangles as a minor. Then a uniform random $n$-vertex graph from $\Ce$ has a similar distribution to a uniform random forest with $(n-1)$ vertices with an additional vertex $x$ which is adjacent to each vertex of the forest with probability $1/2$\footnote{In an earlier draft we made the false conjecture that the uniform random graph on $n$ vertices from any non-trivial minor-closed class BS-converges. We would like to thank Louigi Addario-Berry and Johannes Carmesin for pointing out counterexamples including this one.}.

The failure of BS-convergence in these examples is due to arbitrarily high degree vertices appearing in the neighbourhood of the root. Motivated by this we introduce, in Section~\ref{secInfDeg}, a notion of convergence for rooted (deterministic) graphs that allows infinite degree vertices in the limit graph. For example, the star with $n$ leaves will converge to a star with countably infinitely many leaves. This notion yields a compactification of the set of (isomorphism classes of) countable rooted graphs. Considering the weak topology on this compact space we obtain a notion of convergence for sequences of random graphs that generalises BS-convergence. Interestingly, compactness implies that every sequence has a convergent subsequence in this context, although this is not true with respect to BS-convergence. 

We conjecture that the uniformly random $n$-vertex graph from any minor-closed family converges in this sense.

\comment{
\section{OLD Introduction}

In this paper we conjecture that the local statistics of a typical $n$-vertex graph from a minor-closed family converge as $n$ goes to infinity, and prove this conjecture in certain special cases including outerplanar and series-parallel graphs. Our study is motivated by recent developments in discrete random geometry as well as enumerative combinatorics.

The precise notion of convergence alluded to above is that of Benjamini \& Schramm \cite{BeSchrRec}, which we recall in \Sr{SecBS}. When a sequence of (possibly random) finite graphs converges in this sense, then it gives rise to a limit object which is a (usually infinite) random rooted graph. This limit graph then enjoys the properties of stationarity with respect to random walk and unimodularity \cite{AbThViBen,BenCoa}, which are useful tools in understanding such limits.

Perhaps the most celebrated example of such a limit is the Uniform Infinite Planar Triangulation (UIPT), obtained as the limit, as  $n$ goes to infinity, of a uniform triangulation of the sphere with $n$ vertices \cite{AnSchrUn}. It was proposed as a model of discrete random planar geometry, and it has received great interest from both mathematicians and physicists, as it plays a central role in Liouville quantum gravity; see \cite{DupSheLiou} and references therein.

The proof of our conjecture for the aforementioned special cases is based on methods of analytic combinatorics, and particularly the notion of subcritical graph classes. We hope that this will trigger an exchange of ideas between the community of enumerative combinatorics, who have extensively studied various types of planar graphs and their generalisations\footnote{Some references are given below; see e.g.\ \cite{DrGiNoDeg} for many more.} starting with the work of Tutte \cite{TutCen}, and the community of discrete random geometry, who have developed an interest in similar topics with different motivation. 

We prove that a uniformly random labelled rooted  graph with $n$ vertices from a subcritical class (see Section~\ref{subdet} for the definition) converges weakly (as $n\to \infty$), and we determine the probability distribution of the limit object (Theorem~\ref{thm:subcritical_labelled}). With a bit more work, we extend this to unlabelled rooted graphs (Section~\ref{secSubcrit_unlabelled}), and to unlabelled graphs with a root chosen uniformly at random (which is tantamount to Benjamini-Schramm convergence). It turns out that the Benjamini-Schramm limit is not the same as the weak limit of rooted (unlabelled) graphs, see Theorems~\ref{thm:subcritical_unlabelled_rooted} and~\ref{thm:subcritical_unlabelled_BS}\footnote{As we mention below, the weak convergence of uniformly random labelled graphs and rooted unlabelled graphs from subcritical classes was independently obtained by Stufler \cite{StuRan}; the Benjamini-Schramm convergence of (unrooted) unlabelled graphs is new.}. A special case of \Tr{thm:subcritical_unlabelled_BS} is that the uniform unlabelled tree converges in the Benjamini-Schramm sense. This was independently proved by Stufler \cite{StuCon}. The corresponding statement for labelled trees is  well-known, and can be traced back to Grimmett \cite{GriRan}.
}

\section{Benjamini-Schramm Convergence} \label{SecBS}

Given a sequence $\seq{R}$ of rooted graphs, we say that $(R_n)$ converges 
\defi{weakly}, if \fe\ $r\in \N$, the ball of radius $r$ in $R_n$ centred at its root converges in distribution as $n$ tends to infinity. This definition coincides with the usual notion of weak convergence when we endow the class of finite and locally finite graphs with the \defi{neighbourhood metric}; see \cite{BeSchrRec} or Section~\ref{secInfDeg}.

Let $\seq{G}$ be a sequence of (unrooted) finite graphs, where each $G_n$ might be a random graph with an arbitrary distribution. We can derive a random rooted graph $R_n$ from each $G_n$ by rooting $G_n$ at a vertex chosen uniformly at random from $V(G_n)$. We say that $(G_n)$ converges in the \defi{Benjamini-Schramm} sense (Aldous \& Steele \cite{AlStObj} call this the \defi{local weak} sense), or \defi{BS-converges} for short, if $(R_n)$ converges weakly. 

\comment{
\section{Our Conjectures}

We now formulate our main question, first for rooted and then for unrooted graphs.

\begin{conjecture}\label{Croot}
Let $\cs$ be a finite set of finite (connected) graphs, and $Ex^{\bullet}(\cs)$ the class of rooted (labelled or unlabelled) connected graphs with no minor in $\cs$. 
Let $R_n$ be a uniformly random element of $Ex^{\bullet}(\cs)$ with $n$ vertices. Then $R_n$ converges weakly.
\end{conjecture} 

Some support for this comes from a theorem of Mader \cite{MadHom}, stating that graphs in $Ex(\cs)$ have bounded average degree.

For labelled graphs, \Cnr{Croot} is equivalent to 

\begin{conjecture}\label{Cunroot}
Let $\cs$ be a finite set of finite (connected) graphs, and $Ex(\cs)$ the class of (labelled or unlabelled) connected graphs with no minor in $\cs$. Let $G_n$ be a uniformly random element of $Ex(\cs)$ with $n$ vertices. Then $G_n$ BS-converges. 
\end{conjecture} 

We emphasize that the limits in the labelled and unlabelled case are not necessarily the same. In fact, for the subcritical classes treated in Sections~\ref{secSubcrit} and~\ref{secSubcrit_unlabelled}, they are not. Moreover, in the unlabelled case, it can make a difference whether rooted graphs are chosen uniformly at random or a random graph is chosen first, and one of the vertices is selected (again uniformly at random) as the root afterwards. Again, this will become evident in Section~\ref{secSubcrit_unlabelled}, where subcritical graph classes are considered. In fact, slightly different proof techniques are required to deal with the two cases. One numerical example to illustrate this fact: the root of a random rooted unlabelled tree has limiting probability $0.338322$ to be a leaf, while a randomly chosen vertex of a random (rooted or unrooted) unlabelled tree has limiting probability $0.438156$ to be a leaf.

Graph classes as the ones appearing in Conjectures~\ref{Croot} and \ref{Cunroot} are important due to the Graph Minor Theorem of Robertson and Seymour \cite{GMXX,DiestelBook05}, which implies that if a class $C$ of graphs is closed under taking minors,  then $C = Ex(\cs)$ for some finite set of graphs $\cs$.

\section{Special Cases}

The most elementary special case of \Cnr{Croot} is where  $\cs$ consists of just the triangle, in which case  $Ex(\cs)$ consists of the trees. It is known that uniform labelled trees BS-converge to a ray with i.i.d.\ Galton-Watson trees with Poisson(1) offspring attached to all its vertices \cite{AlStObj}. 

In \Sr{secSubcrit} we prove Conjectures~\ref{Croot} and \ref{Cunroot} for the so-called \defi{subcritical} graph classes, for which we also provide a precise description of the limit object. Such classes are not defined by means of forbidden minors, but there are important special cases that can be defined this way: cacti, outerplanar graphs and series-parallel graphs are examples of  subcritical families, and they coincide with $Ex(\{K_4^-\}), Ex(\{K_4, K_{3,4}\}) $ and $Ex(\{K_4\})$ respectively (where $K_4^-$ is the graph obtained from $K_4$ by deleting an edge).

\subsection{Random Planar Maps}

An important special case of \Cnr{Cunroot} is where $\cs=\{ K_5, K_{3,3} \}$, which by Kuratowski's theorem corresponds to the class of planar graphs. One could ask the same question for planar maps, i.e.\ planar graphs with a fixed embedding in $\mathbb{S}^2$ (up to isomorphism of $\mathbb{S}^2$). This latter question is fast becoming folklore in the random geometry community, as it is a natural extension of the UIPT construction. We remark that for planar maps, as well as in many other cases, the question is equivalent in the rooted or unrooted case; see  \cite[Corollary~2]{RichWorAlm}. 

The degree distribution for random planar maps is known to converge; see 
\cite{DrGiNoDeg} and \cite{PaStDeg}. This fact can be thought of as a first step towards proving that random planar maps  BS-converge. Indeed, Gao \& Wormald \cite{GaWoSha} prove convergence of the degree distribution for random triangulation, and proceed to show concentration of the number of copies of any subtriangulation.

Bernasconi, Panagiotou and Steger \cite{BePaStDeg} prove the convergence of degree distributions for random labelled graphs from subcritical classes, which will be the main objects of interest in the last two sections of this paper. They give explicit results for series-parallel and outerplanar graphs; see also \cite{DrGiNoVer}. For critical classes see \cite{PaStDeg}.

A lot of work has been done on the distribution of block sizes of random planar maps; see \cite{BFSS} and references therein. Addario-Berry \cite{A-BPro} proves the convergence in distribution of the rescaled size of the $k$th largest 2-connected block in a uniform random map with $n$ edges.



\section{Maximal Graphs}

Part of our motivation for studying the above conjectures comes from the hope that the limit objects might provide interesting paradigms of random geometries, just like the UIPT. However, in many cases, like the ones we handle in \Sr{secSubcrit}, the limit object turns out to be a tree-like structure whose nodes are finite graphs. One possibility for trying to obtain more interesting limit objects is to replace $Ex(\cs)$ by its subclass consisting of those graphs in $Ex(\cs)$ that are maximal with respect to adding edges\footnote{We would like to thank Johannes Carmesin for this suggestion.}. To make this more precise, Let $\cs$ be a finite set of finite (connected) graphs, and $MaxEx(\cs)$ the class of graphs $G$ that have no minor in $\cs$ such that adding any edge to $G$ results in a graph that is not in $Ex(\cs)$. For example, triangulations of the sphere are precisely the maximal planar graphs $MaxEx(\{K_5,K_{3,3}\})$.

\begin{conjecture}\label{MaxSurf}
Let $G_n$ be a (labelled or unlabelled) uniformly random graph with $n$ vertices which is edge-maximal with respect to the property of being embeddable in a fixed surface. Then $G_n$ BS-converges, and its limit coincides with the UIPT. 
\end{conjecture} 

Our motivation for this conjecture is that we expect a typical vertex in a large triangulation of a fixed surface to have a large planar neighbourhood. One could remove the requirement of being edge-maximal here, and ask whether the limit of $G_n$ is almost surely planar.

It would probably be much harder and exciting to ask a similar question for classes characterised by forbidden minors:
\begin{question}\label{limmaxex}
Let $\cs$ be a finite set of finite (connected) graphs, and $MaxEx(\cs)$ the class of connected graphs with no minor in $\cs$ that are extremal with this property. Let $G_n$ be a uniformly random element of $MaxEx(\cs)$ with $n$ vertices. If $G_n$ BS-converges, what is the limit? 
\end{question} 

The graph structure theorem of Robertson \& Seymour \cite{GM17,wikiGST} provides useful intuition here. The first step in answering Question~\ref{limmaxex} should be understanding how close a uniformly chosen vertex is to an \defi{apex}, a \defi{vortex}, or a vertex involved in a \defi{clique-sum} (see \cite{wikiGST} for an explanation of this terminology). In particular, we would like to know if the limit is almost surely planar. Another interesting question is whether the limit is transient or recurrent; for the UIPT this question turned out to be quite hard, and was resolved in \cite{GurNachRec}. Benjamini \& Schramm asked this question for general bounded degree random graphs with a forbidden minor in \cite{BeSchrRec}.


\medskip

In the special case of outerplanar graphs, the maximal 2-connected ones coincide with triangulations of an $n$-gon. A uniform triangulation of an $n$-gon, seen as a graph, can easily be seen to BS-converge using the dual binary tree\footnote{We would like to thank Nicolas Curien for this remark.}. We refer the interested reader to \cite{CurKorRan}.
}

\section{Labelled Subcritical Graph Classes} \label{secSubcrit}

In this and the following section, we prove the weak convergence of labelled subcritical graph families.  As mentioned earlier, important examples of subcritical graph classes include cacti, outerplanar graphs and series-parallel graphs.  Loosely speaking, subcritical families are ``tree-like'', and indeed their BS-limit is very similar to that of random trees. Subcritical graph classes have also been observed to exhibit tree-like behaviour of a different kind: specifically, Panagiotou, Stufler and Weller \cite{PanStuWel} showed that the scaling limit of random graphs from subcritical classes, in the Gromov-Hausdorff sense, is Aldous' continuum random tree. The degree distribution was studied, as already mentioned earlier, by Bernasconi, Panagiotou and Steger \cite{BePaStDeg}, while extremal parameters such as the maximum degree and the diameter are considered in \cite{drmota13}. More recently, and parallel to this work, Stufler \cite{StuRan} provided an approach to subcritical graphs via decorated trees, which can be used to obtain the scaling limit, but also local weak convergence of rooted graphs; his Theorems 1.11 and 1.13  parallel our Theorems~\ref{thm:subcritical_labelled} and~\ref{thm:subcritical_unlabelled_rooted}, although his approach is of a more probabilistic nature than ours, and he gives a different description of the limit objects. 

\subsection{Subcritical Details} \label{subdet}

This section contains the technical background for the results that follow. 
Let us first recall the formal definition of a subcritical graph class.

\begin{definition}
A class of graphs $\Ge$ is called \defi{block-stable} if it contains the graph $K_2$  and has the property that a graph $G$ belongs to $\Ge$ if and only if each of its blocks (maximal $2$-connected subgraphs) belongs to $\Ge$\footnote{A minor-closed class is block-stable if and only if it is addable; that is, each excluded minor is 2-connected.}. If $\Ge$ is such a block-stable class of labelled graphs, $\Ce$ the class of connected graphs in $\Ge$ and $\Be$ the class of blocks ($2$-connected graphs) in $\Ge$, then, following the notation of \cite{flajolet}, one has the symbolic decomposition
$$\Ge = \mathrm{Set}(\Ce),\ \Ce^{\bullet} = \mathcal{Z} \times \mathrm{Set}(\Be' \circ \Ce^{\bullet}),$$
where $\mathcal{Z}$ stands for a single vertex, $\Ce^{\bullet}$ is the class of graphs in $\Ce$ with a distinguished root, and $\Be'$ denotes the class derived from $\Be$ by not labelling one of the vertices. On the level of (exponential) generating functions $G(z)$, $C(z)$ and $B(z)$ associated with $\Ge$, $\Ce$ and $\Be$ respectively, this yields
$$G(z) = \exp(C(z))$$
and
$$C^{\bullet}(z) = z \exp\big(B'(C^{\bullet}(z)) \big),$$
where $C^{\bullet}(z) = z C'(z)$ is the exponential generating function for $\Ce^{\bullet}$. We call the class \defi{subcritical} if the radii of convergence $\rho$ and $\eta$ of $C(z)$ 
 and $B(y)$ satisfy the inequality $C^{\bullet}(\rho) < \eta$. 
\end{definition}

\begin{remark}
The requirement that $K_2$ be a possible block is not crucial and was mostly included for technical convenience in \cite{drmota11}. In the aforementioned examples, this condition is of course satisfied, as it is for every nontrivial minor-closed family of graphs.
\end{remark}

If these technical conditions are satisfied, then the generating function $C^{\bullet}(z)$ has a square root singularity at its radius of convergence $\rho$ \cite[Lemma 7]{drmota11}, i.e. 
\begin{equation}\label{eq:asymp_expansion}
C^{\bullet} = a - b (1-z/\rho)^{1/2} + O(|1-z/\rho|)
\end{equation}
around $\rho$ for suitable constants $a,b$, and $C^{\bullet}$ has no further singularities whose absolute value is $\rho$. Since $y = C^{\bullet}(z)$ is defined implicitly by the equation $y = z \exp(B'(y))$, we know that the partial derivative with respect to $y$ has to vanish at the singularity, i.e.
\begin{equation}\label{eq:sing_equation}
1 = \frac{d}{dy} z \exp \big(B'(y) \big) \Big|_{z=\rho,y=C^{\bullet}(\rho)},
\end{equation}
for otherwise there would be an analytic continuation of $C^{\bullet}$ around $\rho$ by the implicit function theorem (see e.g.\ \cite[Appendix B.5]{flajolet}). We will require this identity later. Singularity analysis (see Section VI of \cite{flajolet}, specifically Theorem VI.4) yields an asymptotic formula for the coefficients of $C^{\bullet}$ from the expansion around the singularity:
\begin{equation}\label{eq:asymp_number}
\frac{c_n}{n!} = [z^n] C^{\bullet}(z) \sim A \cdot n^{-3/2} \cdot \rho^{-n},
\end{equation}
where $c_n$ denotes the number of graphs of order $n$ in $\cc^{\bullet}$ and the constant $A$ is given by $A = b/(2\sqrt{\pi})$ ($b$ as in~\eqref{eq:asymp_expansion}).

To describe the BS-limit of subcritical families, we define the concept of a $2$-ended \defi{link}. A $2$-ended link is a graph in $\Ge$ that is obtained from a $2$-connected graph $B \in \Be$ with two distinct distinguished vertices, called the \defi{source} $s$ and the \defi{sink} $t$, by identifying each of the vertices of $B$ except for the sink with the root of some graph in $\cc^{\bullet}$. The latter rooted graphs are called the \defi{branches} of the link.

On the set $\Le$ of all unlabelled $2$-ended links, we define a probability measure $p_{\Le}$ in the following way: for $L \in \Le$, let $\ell(L)$ be the number of nonisomorphic labellings of $L$, where all vertices except for the sink are labelled, let $|L|$ be the number of labelled (i.e.\ non-sink) vertices of $L$, and set
\begin{equation}\label{pL}
p_{\Le}(L) = \frac{\ell(L) \rho^{|L|}}{|L|!}.
\end{equation}
To prove that this is in fact a probability measure, we need some properties of the generating function: recall once again that we have
$$C^{\bullet}(z) = z \exp\big(B'(C^{\bullet}(z)) \big).$$
In view of~\eqref{eq:sing_equation}, the dominant singularity $\rho$ of $C^{\bullet}$ must satisfy the equation
$$1 = \rho \exp \big(B'(C^{\bullet}(\rho)) \big) B''(C^{\bullet}(\rho)) = C^{\bullet}(\rho ) B''(C^{\bullet}(\rho)).$$
Now let us interpret the generating function $C^{\bullet}(z) B''(C^{\bullet}(z))$ combinatorially: $C^{\bullet}(z)$ is simply the generating function for rooted graphs in $\Ge$, and  $B''(C^{\bullet}(z))$ is the generating function for a derived block of which a second vertex has been distinguished (but not labelled), with a rooted graph in $\Ge$ substituted for every labelled vertex. This can precisely be interpreted as a labelled element of $\Le$: the factor $C^{\bullet}(z)$ represents the source branch, and $B''(C^{\bullet}(z))$ the rest of the link. Thus
$$\sum_{L \in \Le} p_{\Le}(L) = \sum_{L \in \Le} \frac{\ell(L) \rho^{|L|}}{|L|!} = C^{\bullet}(\rho ) B''(C^{\bullet}(\rho))  = \rho \exp\big(B'(C^{\bullet}(\rho))\big) B''(C^{\bullet}(\rho)) = 1.$$
\begin{remark}
The probability measure $p_{\Le}$ on $2$-ended links can also be described by a two-step procedure that determines the block containing source and sink first and the branches afterwards.
\end{remark}

\subsection{Convergence}
Now we are ready for the main theorem in the labelled setting:

\begin{theorem}\label{thm:subcritical_labelled}
Let $\Ce^{\bullet}$ be a subcritical family of labelled rooted connected graphs, and $R_n$ a uniformly random element of $\Ce^{\bullet}$ with $n$ vertices. Then $R_n$ converges weakly 
to a random infinite rooted graph that can be described as follows:
\begin{itemize}
\item The root is the source of a first $2$-ended link $L_1 \in \Le$, chosen randomly according to the probability measure $p_{\Le}$.
\item For every $j \in \N$, identify the sink of $L_j$ with the source of a $2$-ended link $L_{j+1} \in \Le$, again chosen randomly according to the probability measure $p_{\Le}$.
\item The result is an infinite chain of $2$-ended links $L_1,L_2,\ldots$. The choices are pairwise independent.
\end{itemize}
\end{theorem}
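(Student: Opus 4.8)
The plan is to establish, for each fixed radius $r$, convergence in distribution of the ball $B_r(R_n)$ around the root; by the definition of weak convergence recalled in Section~\ref{SecBS} this is exactly the assertion. The engine is a \emph{peeling} of the root combined with the coefficient asymptotics \eqref{eq:asymp_number}. First I would fix the combinatorial presentation suggested by the block decomposition $\Ce^{\bullet} = \mathcal Z \times \mathrm{Set}(\Be' \circ \Ce^{\bullet})$, read from the root: a rooted graph can be presented as a chain of $j$ links $\lambda_1,\dots,\lambda_j \in \Le$ (the sink of $\lambda_i$ identified with the source of $\lambda_{i+1}$), followed by a single rooted \emph{continuation} $X \in \Ce^{\bullet}$ attached at the sink of $\lambda_j$. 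For fixed finite links $\lambda_1,\dots,\lambda_j$, the exponential generating function of graphs carrying such a presentation is $\prod_{i=1}^{j}\frac{\ell(\lambda_i)}{|\lambda_i|!}z^{|\lambda_i|}\cdot C^{\bullet}(z)$, since a labelled copy of $\lambda_i$ contributes $\frac{\ell(\lambda_i)}{|\lambda_i|!}z^{|\lambda_i|}$ and the continuation contributes the factor $C^{\bullet}(z)$.

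The crucial observation is that for fixed finite $\lambda_1,\dots,\lambda_j$ and $n$ large, the continuation $X$ contains all but $K:=\sum_i|\lambda_i|$ of the $n$ vertices, hence is the unique macroscopic (``giant'') branch, so each graph of order $n$ admits at most one such presentation. Extracting coefficients then gives
\[
\Pr[\text{$R_n$ presented by }(\lambda_1,\dots,\lambda_j)]=\Big(\prod_{i=1}^{j}\frac{\ell(\lambda_i)}{|\lambda_i|!}\Big)\frac{c_{n-K}/(n-K)!}{c_n/n!}\longrightarrow\prod_{i=1}^{j}\frac{\ell(\lambda_i)\rho^{|\lambda_i|}}{|\lambda_i|!}=\prod_{i=1}^{j}p_{\Le}(\lambda_i),
\]
where the limit uses \eqref{eq:asymp_number} (so that $\frac{c_{n-K}/(n-K)!}{c_n/n!}\to\rho^{K}$) and the definition \eqref{pL}. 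This identifies the limiting joint law of the first $j$ links as $j$ independent samples from $p_{\Le}$.

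To upgrade these pointwise limits to genuine convergence in distribution I would verify that no mass escapes. Summing the displayed probabilities over all finite $(\lambda_1,\dots,\lambda_j)$ of bounded total size and letting the bound grow yields $\big(\sum_{L\in\Le}p_{\Le}(L)\big)^{j}=1$, using the identity $\sum_{L}p_{\Le}(L)=1$ established just before the theorem; the complementary event --- that $R_n$ fails to have a unique giant direction at one of its first $j$ levels --- is controlled by a one-big-jump estimate, since the convolution $\sum_{a+b=m}\frac{c_a}{a!}\frac{c_b}{b!}$ is dominated by terms with one summand bounded, so two macroscopic branches occur with probability $O(n^{-1/2})\to 0$. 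Hence the first $j$ links are well defined with probability tending to $1$ and converge jointly to i.i.d.\ $p_{\Le}$. Finally, because consecutive sinks lie at graph distance at least $1$ apart, the sink of the $(r{+}1)$-st link sits at distance $>r$ from the root, so the giant continuation beyond $\lambda_{r+1}$ is invisible within radius $r$; the block--cut structure forbids any other adjacencies, whence $B_r(R_n)$ is a deterministic (continuous) function of $(\lambda_1,\dots,\lambda_{r+1})$ alone. Pushing this map through the joint convergence of the first $r{+}1$ links gives $B_r(R_n)\Rightarrow B_r(L_\infty)$, where $L_\infty$ is the infinite chain of i.i.d.\ links; as the chain is locally finite, this is exactly weak convergence to the stated limit.

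The main obstacle is the rigorous single-giant/unique-spine step: one must show that, with probability tending to $1$, a uniform $R_n$ has a single macroscopic branch direction at each of its first few levels (so that the first $r{+}1$ links are canonically defined and the presentations above are genuinely exhaustive and mutually exclusive), and that the residual mass of ``balanced'' configurations vanishes. This is where subcriticality ($C^{\bullet}(\rho)<\eta$) enters essentially, guaranteeing that the composition scheme inherits its singularity from a single inner $C^{\bullet}$; the accompanying labelled bookkeeping --- matching the automorphism factor $\ell(\lambda_i)/|\lambda_i|!$ exactly to $p_{\Le}$ --- is delicate but routine, and the singularity-analysis ratio is immediate from \eqref{eq:asymp_number}.
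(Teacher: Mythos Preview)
Your proposal is correct and follows essentially the same route as the paper: both establish, for each fixed finite chain of links $L_1,\ldots,L_k$, that the probability of the event $\mathcal S(L_1,\ldots,L_k)$ (root presented as that chain followed by a continuation in $\Ce^{\bullet}$) converges to $\prod_j p_{\Le}(L_j)$ via the coefficient asymptotics~\eqref{eq:asymp_number}, and then deduce convergence of $B_r(R_n)$ by summing over chains of length exceeding $r$. Your treatment of tightness and of the uniqueness of the presentation (the ``single-giant'' step) is more explicit than the paper's---which simply remarks that the events are eventually pairwise disjoint and that the limiting probabilities sum to $1$---but the underlying computation is the paper's Lemma~\ref{lem:blockshape} verbatim.
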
 


In fact, we prove a slightly stronger statement than  weak convergence to the aforementioned limit object:

\begin{lemma}\label{lem:blockshape}
Let $\Ce^{\bullet}$ be a subcritical family of labelled rooted connected graphs.
Let $L_1,L_2,\ldots,L_k \in \Le$ be a finite sequence of $2$-ended links. Consider the event $\mathcal{S}(L_1,L_2,\ldots,L_k)$ that a uniformly random element of $\Ce^{\bullet}$ with $n$ vertices has a structure that can be described in the following way:
\begin{itemize}
\item the root is the source of $L_1$,
\item for $1 \leq j < k$, the sink of $L_j$ is identified with the source of $L_{j+1}$,
\item finally, the sink of $L_k$ is identified with the root of a rooted graph in $\Ce^{\bullet}$.
\end{itemize}
The probability that this event occurs tends to $\prod_{j=1}^k p_{\Le}(L_j)$ as $n \to \infty$.
\end{lemma}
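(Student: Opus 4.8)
The plan is to turn the statement into a ratio of coefficients of $C^{\bullet}(z)$ and to evaluate it using the square-root singularity recalled in~\eqref{eq:asymp_number}. Write $c_n = n![z^n]C^{\bullet}(z)$ for the total number of labelled rooted graphs of order $n$ in $\Ce^{\bullet}$, so that the probability in question equals $S_n/c_n$, where $S_n$ counts those order-$n$ members realising the event $\mathcal{S}(L_1,\dots,L_k)$. The first task is therefore to find a generating function for $S_n$.

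First I would assemble the exponential generating function of the structured graphs. Such a graph consists of the fixed links $L_1,\dots,L_k$ glued in a chain, the sink of $L_j$ playing the role of the source of $L_{j+1}$, together with an arbitrary rooted graph (the \emph{tail}) whose root is identified with the sink of $L_k$. The key bookkeeping point is that each gluing ``sink of $L_j$ = source of $L_{j+1}$'' is free of charge label-wise: the sink is the unique unlabelled vertex of $L_j$, whereas the source is a genuine labelled vertex of $L_{j+1}$. Hence the labelled vertex sets of the pieces partition $\{1,\dots,n\}$, and the labelled product rule yields
\begin{equation*}
F(z) = \Big(\prod_{j=1}^{k}\ell(L_j)\,\frac{z^{|L_j|}}{|L_j|!}\Big)\,C^{\bullet}(z),
\end{equation*}
in which $\ell(L_j)\,z^{|L_j|}/|L_j|!$ is the EGF of the links isomorphic to $L_j$ (exactly the contribution used in the verification that $\sum_{L}p_{\Le}(L)=1$) and $C^{\bullet}(z)$ is the EGF of the tail. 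Reading off coefficients, $n![z^n]F(z)$ enumerates pairs consisting of a labelled graph together with a chain-decomposition of type $(L_1,\dots,L_k)$, the total order being $\sum_j|L_j|+|\mathrm{tail}|=n$.

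The step I expect to be the main obstacle is that $n![z^n]F(z)$ a priori counts a graph once for each chain-decomposition it admits, whereas $S_n$ counts each qualifying graph once; so I must show that the two agree. I would do this by proving that for all large $n$ the decomposition is unique. Set $N=\sum_{j}|L_j|$; every graph realising $\mathcal{S}(L_1,\dots,L_k)$ then has a tail of order exactly $n-N$. If a graph admitted two distinct chain-decompositions they would first differ at some spine vertex $v$, choosing different (block, sink) pairs there; but then the entire forward part of one decomposition, which contains that decomposition's tail of order $n-N$, would lie inside one of the finite branches of the link $L_j$ belonging to the other decomposition. This is impossible once $n-N$ exceeds the (fixed) size of those branches. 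Consequently $S_n = n![z^n]F(z)$ for all large $n$.

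It then remains to extract coefficients and take the limit, which is routine. Since $\prod_{j}\ell(L_j)z^{|L_j|}/|L_j|!$ is the monomial $K z^{N}$ with $K=\prod_{j}\ell(L_j)/|L_j|!$, we obtain $[z^n]F(z)=K\,[z^{n-N}]C^{\bullet}(z)$ and hence
\begin{equation*}
\frac{S_n}{c_n}=K\,\frac{[z^{n-N}]C^{\bullet}(z)}{[z^{n}]C^{\bullet}(z)}.
\end{equation*}
By the coefficient asymptotics $[z^m]C^{\bullet}(z)\sim A\,m^{-3/2}\rho^{-m}$ of~\eqref{eq:asymp_number}, the ratio of coefficients tends to $\rho^{N}$ as $n\to\infty$ (the polynomial factor $(1-N/n)^{-3/2}\to 1$). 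Therefore $S_n/c_n\to K\rho^{N}=\prod_{j=1}^{k}\ell(L_j)\rho^{|L_j|}/|L_j|!=\prod_{j=1}^{k}p_{\Le}(L_j)$, as required.
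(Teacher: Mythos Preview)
Your proof is correct and follows essentially the same approach as the paper: both compute the number of $n$-vertex graphs admitting the chain structure as a labelled product (you via the EGF $F(z)=\big(\prod_j \ell(L_j)z^{|L_j|}/|L_j|!\big)C^{\bullet}(z)$, the paper via the equivalent multinomial coefficient $\binom{n}{|L_1|,\dots,|L_k|,n-N}\prod_j\ell(L_j)\,c_{n-N}$), and both divide by $c_n$ and invoke the coefficient asymptotics~\eqref{eq:asymp_number} to obtain the limit $\prod_j p_{\Le}(L_j)$. One point where you are actually more careful than the paper: you explicitly argue that for large $n$ the chain decomposition is unique (so the EGF count does not overcount graphs), whereas the paper's proof simply asserts the multinomial formula as ``the number of elements'' without discussing possible multiplicity; your pigeonhole argument that a second decomposition would force the large tail into a bounded-size branch is the right idea.
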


\begin{proof}
Recall that the number $c_n$ of elements of order $n$ in $\Ce^{\bullet}$ is asymptotically given by
$$\frac{c_n}{n!} \sim A \cdot n^{-3/2} \cdot \rho^{-n}$$
as $n \to \infty$, cf.~\eqref{eq:asymp_number}. Now simply note that the number of elements of order $n$ in $\Ce^{\bullet}$ that have the structure that determines $\mathcal{S}(L_1,L_2,\ldots,L_k)$ is given by
\begin{multline*}
\binom{n}{|L_1|,|L_2|,\ldots,|L_k|,n-|L_1|-\cdots-|L_k|} \cdot \prod_{j=1}^k \ell(L_j) \cdot c_{n-|L_1|-\cdots-|L_k|} \\
= n! \cdot \prod_{j=1}^k (p_{\Le}(L_j)\rho^{-|L_j|}) \cdot \frac{c_{n-|L_1|-\cdots-|L_k|}}{(n-|L_1|-\cdots-|L_k|)!}.
\end{multline*}
In view of~\eqref{eq:asymp_number}, this is asymptotically equal to
\begin{align*}
n! \cdot \prod_{j=1}^k p_{\Le}(L_j) \cdot A \cdot (n-|L_1|-\cdots-|L_k|)^{-3/2} \cdot \rho^{-n} &\sim n! \cdot \prod_{j=1}^k p_{\Le}(L_j) \cdot A \cdot n^{-3/2} \cdot \rho^{-n} \\
& \sim c_n \prod_{j=1}^k p_{\Le}(L_j),
\end{align*}
which completes the proof. 
\end{proof}
Note that the events of the form  $\mathcal{S}(L_1,L_2,\ldots,L_k)$ are not actually pairwise disjoint, but for large enough $n$ they essentially are (in the sense that any two such events are disjoint for large enough $n$). Theorem~\ref{thm:subcritical_labelled} follows immediately by summing over all events $\mathcal{S}(L_1,L_2,\ldots,L_k)$ with $k > r$ that generate a given $r$-neighbourhood around the root.

As it turns out, an even stronger statement holds. In the following, which is a restatement of Theorem~\ref{thmIntroLabel}, we will prove that we have almost sure convergence in the Benjamini-Schramm sense if we take a sequence of random graphs from a subcritical class:

\begin{theorem}\label{thm:subcritical_labelled_almostsure}
Let $\Ce$ be a subcritical family of labelled connected graphs, and $R_n$ a uniformly random element of $\Ce$ with $n$ vertices. 
Then the sequence $R_1,R_2,\ldots$ converges almost surely in the Benjamini-Schramm sense to the random infinite graph described in Theorem~\ref{thm:subcritical_labelled}.
\end{theorem}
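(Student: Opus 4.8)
The plan is to upgrade the in-distribution (weak) convergence already obtained to almost-sure convergence of the empirical neighbourhood statistics, using a fourth-moment concentration estimate together with the first Borel--Cantelli lemma.

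\emph{Reduction.} Fix a radius $r\in\N$ and a finite rooted graph $H$. Write $N_n(H)$ for the number of vertices $v$ of $R_n$ whose $r$-ball $B_r(v)$ is isomorphic to $H$, and set $X_n(H)=N_n(H)/n$. Denote by $\mu(H)$ the probability that the $r$-ball around the root of the limit object of Theorem~\ref{thm:subcritical_labelled} is isomorphic to $H$. For fixed $r$ the empirical law $\frac1n\sum_v\delta_{B_r(v)}$ lives on a countable discrete set with $\sum_H\mu(H)=1$, so by Scheff\'e's lemma BS-convergence of $(R_n)$ is equivalent to $X_n(H)\to\mu(H)$ for every pair $(r,H)$; since there are only countably many such pairs, it suffices to prove $X_n(H)\to\mu(H)$ almost surely for each fixed $(r,H)$ and then intersect the corresponding almost-sure events. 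Moreover the uniform-rooting correspondence between $\Ce^{\bullet}$ and $\Ce$-with-a-uniformly-chosen-root identifies $\Ex[X_n(H)]$ with the root-ball probability of a uniform element of $\Ce^{\bullet}_n$, so $\Ex[X_n(H)]\to\mu(H)$ is precisely the conclusion of Theorem~\ref{thm:subcritical_labelled}.

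\emph{From concentration to a.s. convergence.} The decisive point is that, once we know $\sum_n\Pr\big(|X_n(H)-\Ex[X_n(H)]|>\eps\big)<\infty$ for every $\eps>0$, the \emph{first} Borel--Cantelli lemma (which needs no hypothesis on the joint law of the $R_n$) gives $|X_n(H)-\Ex[X_n(H)]|\le\eps$ eventually, almost surely; combining this with $\Ex[X_n(H)]\to\mu(H)$ and a diagonal argument over $\eps=1/m$ and over all pairs $(r,H)$ yields the theorem. Hence everything reduces to a summable tail bound for the additive functional $N_n(H)=\sum_v\mathbf{1}[B_r(v)\cong H]$.

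\emph{Concentration.} I would aim for $\Ex\big[(N_n(H)-\Ex[N_n(H)])^4\big]=O(n^2)$; Markov's inequality then gives $\Pr(|X_n(H)-\Ex[X_n(H)]|>\eps)=O(\eps^{-4}n^{-2})$, which is summable (note that the variance bound $\mathrm{Var}(N_n(H))=O(n)$ alone would only give an $O(n^{-1})$ tail, which is not summable, so the fourth moment is genuinely needed). This Gaussian-order bound is what one expects for a local additive functional of a tree-like random structure, and it should be accessible by the same singularity-analysis machinery used in Lemma~\ref{lem:blockshape}: one expands $\Ex[N_n(H)^2]$ and its fourth analogue as sums over pairs, respectively quadruples, of vertices of the joint probability that all prescribed $r$-balls occur, using the multivariate pointed generating functions obtained by marking several distinguished vertices together with their local shapes, and reads off the coefficient asymptotics from~\eqref{eq:asymp_number}. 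Alternatively, one may invoke the general central limit theorem and moment asymptotics for additive functionals of subcritical graph classes, which directly supply $\mathrm{Var}(N_n(H))=O(n)$ and convergence of all centred moments to Gaussian ones.

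\emph{Main obstacle.} The technical heart is the decay-of-correlations estimate behind the fourth-moment bound: one must show that the joint occurrence probability of the pattern $H$ at several vertices equals the product of the individual probabilities up to a controlled error. This rests on the fact that a uniform subcritical connected graph is governed by a Galton--Watson-type branching process on its block/link decomposition, so that $r$-balls sitting in ``distant'' links are asymptotically independent. The delicate part is quantitative bookkeeping: the $n^2$-order leading terms of $\Ex[N_n(H)^2]$ must cancel against $(\Ex N_n(H))^2$ down to the variance scale $O(n)$, which forces the second-order singularity analysis to keep track of several coincident square-root singularities of the pointed generating functions simultaneously, while separately handling the $O(n)$ ``nearby'' vertex pairs (those whose links lie within distance $r$) that carry the genuine correlations. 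Establishing this cancellation uniformly over all vertex tuples is where the bulk of the work lies.
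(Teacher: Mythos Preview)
Your high-level strategy---prove concentration of local pattern counts, apply Borel--Cantelli, intersect over countably many patterns---is exactly the paper's, but your execution diverges from it in two important ways, and the paper's choices let it sidestep precisely the ``main obstacle'' you flag.

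First, the paper does \emph{not} count $r$-ball patterns. It counts \emph{fringe subgraphs}: rooted graphs $H\in\cc'$ whose root lies in a single block, occurring as pendant pieces hanging off a cutvertex. This count $F_H(G)$ is genuinely recursive along the block decomposition, so the bivariate generating function $C_H^{\bullet}(z,u)=\sum_G z^{|G|}u^{F_H(G)}/|G|!$ satisfies a clean implicit equation $C_H^{\bullet}=z\exp\big(B'(C_H^{\bullet})+(u-1)z^{|H|}/|H|!\big)$. Your $r$-ball count has no such simple functional equation, since an $r$-ball can straddle many blocks in a way that does not respect the recursive structure; this is why your proposed multi-pointed approach becomes messy. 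The link back to BS-convergence is made in a final step by observing that the event $\mathcal{S}(L_1,\ldots,L_k)$ is equivalent to the randomly chosen root sitting at a specified position inside a particular fringe subgraph.

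Second, the paper does not compute fourth moments by hand. Because the bivariate generating function satisfies a nice analytic implicit equation, one can invoke the quasi-power framework (Drmota's Theorems~2.21--2.22): the dominant singularity $\rho_H(u)$ moves analytically in $u$, and one gets directly an \emph{exponential} tail bound $\Pr(|F_H-\mu_H n|\ge t\sqrt{n})\le c_1 e^{-c_2 t^2}$, from which Borel--Cantelli is immediate (take $t=n^{1/4}$). This completely avoids the cancellation bookkeeping you identify as the crux: there is no need to match the $n^2$ terms of $\Ex[N_n^2]$ against $(\Ex N_n)^2$, because the tail estimate comes packaged from the analytic structure rather than from explicit moment asymptotics. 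Your route is not wrong in principle---central limit theorems for additive functionals of subcritical classes do exist and would deliver the fourth-moment bound---but you have deferred exactly the hard step to an unproven claim, whereas the paper's choice of statistic makes that step a citation.
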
 

Note the subtle difference to Theorem~\ref{thm:subcritical_labelled}: the graphs $R_n$ are unrooted in this context, and the sequence $R_1,R_2,\ldots$ is shown to converge \emph{deterministically} in the Benjamini-Schramm sense with probability $1$ once generated.

For the proof, we need a few definitions that will also become relevant in the following section. Let $H \in \cc'$ be a rooted graph with the property that its root belongs to only one block. Note that $H \in \cc'$ means that all vertices except for the root are labelled. We say that $H$ \defi{occurs at the fringe} of a graph $G \in \cc^{\bullet}$ if there exists a subgraph $K$ of $G$ that is isomorphic to $H$ (including the relative order of the labels) when an appropriate vertex $x$ of $K$ is chosen as a root, no vertex of $K$ other than $x$ has neighbours outside of $K$, and the root of $G$ does not lie in $K \setminus \{x\}$. Note that $x$ must be a cutvertex of $G$ in this case, unless $K = G$, and that $K \setminus \{x\}$ forms a connected component of $G \setminus \{x\}$.

In this scenario, $H$ is called a \defi{fringe subgraph} of $G$, and $K$ is called an \emph{occurrence} of $H$ as a fringe subgraph. We also define occurrences at the fringe of unrooted graphs: the condition is exactly the same, but it is (naturally) no longer required that the root of $G$ is not part of $K \setminus \{x\}$.

\begin{proof}[Proof of Theorem~\ref{thm:subcritical_labelled_almostsure}]
Instead of considering a sequence of random unrooted labelled graphs $R_n$, we rather first consider a sequence $R_n^{\bullet}$ of rooted labelled graphs whose roots we ``forget'' afterwards. Since every labelled graph with $n$ vertices corresponds to exactly $n$ rooted labelled graphs, this does not change the probability measure.

\begin{enumerate}
\item[Step 1.] Consider an arbitrary graph $H \in \cc'$ whose root only belongs to one block, and let $F_H(G)$ denote the number of occurrences of $H$ as a fringe subgraph in a rooted graph $G \in \cc^{\bullet}$. We define a bivariate generating function that takes $F_H(G)$ into account:
$$C_H^{\bullet}(z,u) = \sum_{G \in \cc^{\bullet}} \frac{z^{|G|}}{|G|!} u^{F_H(G)}.$$
This function satisfies the functional equation
\begin{equation}\label{eq:funct_eq_CH}
C_H^{\bullet}(z,u) = x \exp\Big(B'(C_H^{\bullet}(z,u)) + (u-1) \frac{z^{|H|}}{|H|!} \Big).
\end{equation}
The explanation is quite simple: the total number of occurrences of $H$ as a fringe subgraph is exactly equal to the total number of occurrences in branches rooted at vertices that lie in a common block with the root. Some of the root branches may be occurrences of $H$ as fringe subgraphs as well and thus add to the count. The term $(u-1) \frac{z^{|H|}}{|H|!}$ in the functional equation takes this into account.
\item[Step 2.] Now we analyse this functional equation: because of the subcriticality condition, it satisfies all conditions of \cite[Theorem 2.21]{Drmota2009Random}, which imply that $C_H^{\bullet}(z,u)$ (regarded as a function of $z$) has a dominant square root singularity for $u$ in a suitable neighbourhood of $1$:
$$C_H^{\bullet}(z,u) = g_H(z,u) + h_H(z,u) ( 1 - z/\rho_H(u))^{1/2}$$
for certain analytic functions $g_H$, $h_H$ and $\rho_H$, locally around $z = \rho = \rho_H(1)$ and $u=1$. As an immediate consequence (which follows by means of singularity analysis), we have, for some analytic function $\alpha_H$,
\begin{equation}\label{eq:asymp_CH}
[z^n] C_H^{\bullet}(z,u) = \alpha_H(u) n^{-3/2} \rho_H(u)^{-n} \big( 1 + O(n^{-1}) \big),
\end{equation}
uniformly in $u$ if $u$ is confined to a suitable neighbourhood of $1$.
\item[Step 3.] Note that
$$\frac{[z^n] C_H^{\bullet}(z,u)}{[z^n] C_H^{\bullet}(z,1)} = \frac{[z^n] C_H^{\bullet}(z,u)}{[z^n] C^{\bullet}(z)}$$
is exactly the probability generating function of $F_H(R_n^{\bullet})$, where $R_n^{\bullet}$ is a random rooted labelled graph in $\cc^{\bullet}$ with $n$ vertices. Equation~\eqref{eq:asymp_CH} shows that we are in the quasi-power regime, which implies a central limit theorem. More importantly, we obtain a strong concentration result for $F_H(R_n^{\bullet})$ including tail estimates: by \cite[Theorem 2.22]{Drmota2009Random}, we have
\begin{equation}\label{eq:tailbound}
\Pr \Big( | F_H(R_n^{\bullet}) - \mu_H n | \geq t \sqrt{n} \Big) \leq c_1 e^{-c_2 t^2}
\end{equation}
for certain constants $c_1,c_2,\mu_H$ (that depend on $H$). The constant $\mu_H$ is most relevant for us and can be calculated explicitly: it is given by $$\mu_H = \frac{\rho^{|H|}}{|H|! C^{\bullet}(\rho)}.$$
\item[Step 4.] If we take (for instance) $t = n^{1/4}$ in~\eqref{eq:tailbound} and apply the Borel-Cantelli lemma, then we find immediately that
\begin{equation}\label{eq:as_limit}
\lim_{n \to \infty} \frac{F_H(R_n^{\bullet})}{n} = \mu_H
\end{equation}
holds almost surely for every fixed $H \in \cc'$.
\item[Step 5.] Let us now discard the root of $R_n^{\bullet}$ to obtain an unrooted graph $R_n$. If $n > 2|H|+1$, then two occurrences of $H$ as a fringe subgraph cannot overlap. Hence the root of $R_n^{\bullet}$ is part of at most one occurrence of $H$ as a fringe subgraph of $R_n$, and it follows that~\eqref{eq:as_limit} still holds with $R_n^{\bullet}$ replaced by $R_n$. Let us now pick a new root $x$ for $R_n$, uniformly at random. The event $\mathcal{S}(L_1,L_2,\ldots,L_k)$ described in Lemma~\ref{lem:blockshape} can also be interpreted as follows: the two-ended links $L_1,L_2,\ldots,L_k$ form a fringe subgraph $H$ of which $x$ is a vertex (namely the source of $L_1$). The root of $H$ is the sink of $L_k$. Note that there are generally several possibilities for $H$ given $L_1,L_2,\ldots,L_k$, differing by their labels, and that for each choice of $H$ there may be several isomorphic choices of $x$, forming an orbit under the automorphism group of $H$. In view of~\eqref{eq:as_limit}, the probability that this situation occurs when the new root is chosen (considering $R_n$ as given) converges almost surely to a fixed constant, obtained by multiplying the constant $\mu_H$ from~\eqref{eq:as_limit} by the number of labellings and the size of the relevant orbit. In view of Lemma~\ref{lem:blockshape}, the constant must in fact be equal to $\prod_{j=1}^k p_{\cl}(L_j)$, and this can also be verified by a straightforward calculation. Since we have this almost sure convergence for all finite sequences $L_1,L_2,\ldots,L_k$, the theorem follows immediately.
\end{enumerate}
\end{proof}

\section{Unlabelled Subcritical Graph Classes}\label{secSubcrit_unlabelled}

\subsection{Weak Convergence of Rooted Graphs}

For rooted unlabelled graphs, the approach is essentially the same as in the labelled case. Let us first review the definition of subcriticality (and other important facts taken from \cite{drmota11}) in the unlabelled setting, which differs slightly. Unlike the labelled case, the distinction between rooted  and unrooted graphs is now significant.

First of all, we need the notion of \defi{cycle index series}. The cycle index associated with a graph $G$ is the cycle index of its automorphism group, i.e.~the polynomial in formal variables $s_1,s_2,\ldots$ given by
$$Z_G(s_1,s_2,\ldots) = \frac{1}{|\Aut(G)|} \sum_{\sigma \in \Aut(G)} s_1^{c_1(\sigma)}s_2^{c_2(\sigma)} \cdots,$$
where $c_k(\sigma)$ is the number of cycles of length $k$ in the cycle representation of $\sigma$ when viewed as a permutation of the set of vertices of $G$. Likewise, we associate a cycle index series to a family $\cg$ of graphs by summing the cycle indices of all its members:
$$Z_{\cg}(s_1,s_2,\ldots) = \sum_{G \in \cg} Z_G(s_1,s_2,\ldots).$$
The ordinary generating function of $\cg$ can be recovered by setting $s_1 = z$, $s_2 = z^2$, etc.

Once again, let us consider a block-stable class $\cg$, and let $\cc$ and $\cb$ denote the subclasses of connected graphs and blocks in $\cg$ respectively. We associate generating functions with these classes as in the previous section (one small difference being the fact that we are using ordinary generating functions rather than exponential generating functions now), as well as cycle index series. The different generating functions and cycle index series are connected by the following functional equations:
$$G(z) = \exp\Big( \sum_{i \geq 1} \frac{1}{i} C(z^i) \Big)$$
and
$$C^{\bullet}(z) = z \exp \Big( \sum_{i \geq 1} \frac{1}{i} Z_{\cb'}  (C^{\bullet}(z^i),C^{\bullet}(z^{2i}), \ldots) \Big),$$
where $\cc^{\bullet}$ stands for the class of connected graphs in $\cg$ with a distinguished root again, while $\cb'$ is the class of blocks in $\cg$ where one vertex is distinguished but not included in the count of cycles in the cycle index.
The latter equation is of the form
\begin{equation}\label{eq:fun_eq}
C^{\bullet}(z) = z \exp \Big( g(C^{\bullet}(z),z) + A(z) \Big),
\end{equation}
with
$$g(y,z) = Z_{\cb'}(y,C^{\bullet}(z^2),C^{\bullet}(z^3),\ldots)$$
and
$$A(z) = \sum_{i > 1} \frac{1}{i} Z_{\cb'}(C^{\bullet}(z^i),C^{\bullet}(z^{2i}),\ldots).$$
For later reference in the following subsection, we will also need a refinement of the generating function. The full cycle series $Z_{\cc^{\bullet}}$ associated with $\cc^{\bullet}$ satisfies the functional equation
\begin{equation}\label{eq:cycle_index_series}
Z_{\cc^{\bullet}}(s_1,s_2,\ldots) = s_1 \exp \Big( \sum_{i \geq 1} \frac{1}{i} Z_{\cb'}  \big(Z_{\cc^{\bullet}}(s_i,s_{2i},\ldots),Z_{\cc^{\bullet}}(s_{2i},s_{4i},\ldots), \ldots \big) \Big).
\end{equation}
The definition of subcriticality in the unlabelled setting hinges on properties of the functions in \eqref{eq:fun_eq}.

\begin{definition}
A class of unlabelled block-stable graphs is called subcritical if the functions in the functional equation~\eqref{eq:fun_eq} satisfy the following properties:
\begin{enumerate}
\item The radius of convergence $\rho$ of $C^{\bullet}(z)$ is non-zero,
\item $g(y,z)$ is analytic at the point $(C^{\bullet}(\rho),\rho)$,
\item the radius of convergence of $A(z)$ is greater than $\rho$,
\item the radius of convergence of the series $Z_{\cc}(0,z^2,z^3,\ldots)$ is greater than $\rho$.
\end{enumerate}
\end{definition}

Under these conditions, it follows again that the generating function $C^{\bullet}$ has a square root singularity at the radius of convergence $\rho$ that is dominant in the sense that there are no others with the same absolute value, i.e. the asymptotic expansion is of the form
\begin{equation}\label{eq:sing_exp_rho}
C^{\bullet}(z) = a - b (1-z/\rho)^{1/2} + O(|1-z/\rho|).
\end{equation}
As in the previous section, an asymptotic formula for the number of graphs of order $n$ in $\cc^{\bullet}$ follows from \eqref{eq:sing_exp_rho}. For the treatment of the unrooted case in the following section, we will also need a refinement taken from~\cite{drmota11}. Specifically, we consider the function $Z_{\cc^{\bullet}}(s,z^2,z^3,\ldots)$ derived from the cycle index series, in which only fixed points are marked with a special variable $s$. This function has a singular expansion of the form
\begin{equation}\label{eq:bivariate_expansion}
Z_{\cc^{\bullet}}(s,z^2,z^3,\ldots) = a(s,z) + b(s,z) (1-s/\rho(z))^{1/2},
\end{equation}
where $a$ and $b$ are analytic at $(s,z) = (\rho,\rho)$ (see \cite[Lemma 12]{drmota11}). This bivariate expansion becomes relevant in the step from unrooted to rooted graphs.

Let us now describe the limit object in the rooted unlabelled case. We define the family of $2$-ended links $\cl$ as in the previous section, but we have to modify the definition of the probability measure on $\cl$ somewhat. In the unlabelled setup, we define a probability measure $q_{\cl}$ by $q_{\cl}(L) = Z_L(\rho,\rho^2,\ldots)$. Here, the cycle index $Z_L$ associated with a link $L$ does not take the sink into account, which has to be kept fixed by automorphisms (as is also the case for the source). First of all, we have to verify that this is indeed a probability measure.

Note that the sum of all probabilities $q_{\cl}(L)$ is the generating function of $\cl$ evaluated at $z = \rho$. Thus we have
$$\sum_{L \in \cl} q_{\cl}(L) = \sum_{L \in \cl} Z_L(\rho,\rho^2,\ldots) = C^{\bullet}(z) Z_{\cb''}(C^{\bullet}(z),C^{\bullet}(z^2),\ldots) \Big|_{z =\rho}.$$
The first factor stands for the root branch attached to the source of the link, the second factor for the rest. Next recall that the function $y = C^{\bullet}(z)$ is determined by the implicit equation
$$y = z \exp( g(y,z) + A(z)),$$
with $g(y,z)$ and $A(z)$ as defined earlier. As in the labelled case (cf.~\eqref{eq:sing_equation}), the partial derivative with respect to $y$ must vanish at the singularity $\rho$, for otherwise $C^{\bullet}(z)$ could be continued analytically there by the implicit function theorem. Therefore, we have
$$1 = \frac{\partial}{\partial y} z \exp(g(y,z) + A(z))  \Big|_{z = \rho, y = C^{\bullet}(\rho)} = y \frac{\partial}{\partial y} g(y,z) \Big|_{z = \rho, y = C^{\bullet}(\rho)}.$$
Since 
$$\frac{\partial}{\partial y} g(y,z) = \frac{\partial}{\partial y} Z_{\cb'}(y,C^{\bullet}(z^2),C^{\bullet}(z^3),\ldots) = Z_{\cb''}(y,C^{\bullet}(z^2),C^{\bullet}(z^3),\ldots),$$
putting everything together gives
$$\sum_{L \in \cl} q_{\cl}(L) = C^{\bullet}(z) Z_{\cb''}(C^{\bullet}(z),C^{\bullet}(z^2),\ldots) \Big|_{z = \rho} = 1,$$
which is what we need. Now we are ready to prove weak convergence in the unlabelled case following the same method as in the previous section.

\begin{theorem}\label{thm:subcritical_unlabelled_rooted}
Let $\Ce$ be a subcritical family of unlabelled connected graphs, and $R_n$ a uniformly random element of $\Ce^{\bullet}$ with $n$ vertices. Then $R_n$ converges weakly to a limit object that can be described as follows:
\begin{itemize}
\item The root is the source of a first $2$-ended link $L_1 \in \Le$, chosen randomly according to the probability measure $q_{\Le}$.
\item For every $j \in \N$, identify the sink of $L_j$ with the source of a $2$-ended link $L_{j+1} \in \Le$, again chosen randomly according to the probability measure $q_{\Le}$.
\item The result is an infinite chain of $2$-ended links $L_1,L_2,\ldots$. The choices are pairwise independent.
\end{itemize}
\end{theorem}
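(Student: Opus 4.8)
The plan is to mirror the labelled argument of Lemma~\ref{lem:blockshape} and Theorem~\ref{thm:subcritical_labelled}, replacing the labelled bookkeeping (multinomial coefficients together with the labelling factors $\ell(L_j)$) by unlabelled counting through ordinary generating functions and cycle index series. The engine is again the asymptotic formula $c_n = [z^n]C^{\bullet}(z) \sim A\, n^{-3/2}\rho^{-n}$, which in the unlabelled setting follows from the square-root singularity~\eqref{eq:sing_exp_rho} of $C^{\bullet}$ exactly as~\eqref{eq:asymp_number} did in the labelled one.

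First I would establish the unlabelled analogue of Lemma~\ref{lem:blockshape}. For a fixed finite sequence $L_1,\ldots,L_k \in \cl$, I count the rooted unlabelled graphs of order $n$ carrying the spine structure $\mathcal S(L_1,\ldots,L_k)$ (root $=$ source of $L_1$, sink of $L_j$ $=$ source of $L_{j+1}$, and sink of $L_k$ $=$ root of an arbitrary $G' \in \cc^{\bullet}$). Since the links are fixed unlabelled objects, glued in a rigid linear order at their distinguished source/sink vertices, such a configuration is determined by the tail $G'$ alone, and its ordinary generating function factorises as
\begin{equation*}
\Big(\prod_{j=1}^{k} Z_{L_j}(z,z^2,\ldots)\Big)\, C^{\bullet}(z) \;=\; z^{\,|L_1|+\cdots+|L_k|}\, C^{\bullet}(z),
\end{equation*}
using the elementary collapse $Z_{L_j}(z,z^2,\ldots) = z^{|L_j|}$ valid for any single unlabelled structure. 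Extracting the coefficient of $z^n$ gives $c_{\,n-|L_1|-\cdots-|L_k|}$ configurations. Dividing by $c_n$ and feeding in $c_m \sim A\, m^{-3/2}\rho^{-m}$, the polynomial prefactors cancel and the ratio tends to $\rho^{\,|L_1|+\cdots+|L_k|} = \prod_{j=1}^{k} Z_{L_j}(\rho,\rho^2,\ldots) = \prod_{j=1}^{k} q_{\cl}(L_j)$, the unlabelled counterpart of the product $\prod_j p_{\cl}(L_j)$ obtained in Lemma~\ref{lem:blockshape}.

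From here the deduction of weak convergence is formally identical to the labelled case. For each fixed radius $r$, the $r$-ball around the root is generated by finitely many links, so the probability that this ball equals a prescribed finite rooted graph is obtained by summing $\prod_j q_{\cl}(L_j)$ over those sequences $L_1,\ldots,L_k$ (with $k > r$) that produce it. Because the limiting probability of $\mathcal S(L_1,\ldots,L_k)$ factorises over $j$, the limit law is exactly that of an infinite chain of links drawn independently according to $q_{\cl}$; the pairwise independence asserted in the theorem is immediate from this product form.

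The step needing the most care, and the main obstacle, is the counting itself. On the one hand one must check that composing the links and substituting their (fixed) branches contributes no spurious Pólya symmetry factor; this is precisely where the collapse $Z_{L_j}(z,z^2,\ldots)=z^{|L_j|}$ and the rigidity of the distinguished source/sink vertices are essential, and it is the genuine point of departure from the labelled proof, whose counting instead relied on the labels forcing rigidity. On the other hand, as already noted after Lemma~\ref{lem:blockshape}, the events $\mathcal S(L_1,\ldots,L_k)$ are not literally disjoint, since a graph could a priori admit several spine decompositions with the same link sequence. I would resolve this exactly as before: for large $n$ a uniformly random rooted graph has a single macroscopic tail $G'$ of size $n-O(1)$, so the spine leading into it, and hence the decomposition, is essentially unique. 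Consequently counting (graph, spine) configurations agrees asymptotically with counting graphs, and the sums over $r$-ball-generating sequences behave as if the events were disjoint.
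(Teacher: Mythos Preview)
Your proposal is correct and follows essentially the same route as the paper's Lemma~\ref{lem:blockshape_unlabelled}: write the generating function for graphs with the prescribed spine (with the sink of $L_k$ artificially distinguished) as the product $\prod_j Z_{L_j}(z,z^2,\ldots)\cdot C^{\bullet}(z)$, compare coefficients via the square-root asymptotics, and then argue that for large $n$ the extra distinguished vertex is forced anyway. Your explicit use of the collapse $Z_{L}(z,z^2,\ldots)=z^{|L|}$ for a single unlabelled object (so that $q_{\cl}(L)=\rho^{|L|}$) is a neat simplification the paper leaves implicit; just note that this identity holds for \emph{any} single object regardless of symmetry, so the place where the rigidity of the source/sink actually enters is the product decomposition of the cycle index, not the collapse itself.
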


Again, we obtain this by proving a slightly stronger statement:

\begin{lemma}\label{lem:blockshape_unlabelled}
Let $L_1,L_2,\ldots,L_k \in \Le$ be a finite sequence of $2$-ended links. Consider the event $\mathcal{S}(L_1,L_2,\ldots,L_k)$ that a rooted unlabelled graph with $n$ vertices in $\Ce^{\bullet}$ has a structure that can be described in the following way:
\begin{itemize}
\item the root is the source of $L_1$,
\item for $1 \leq j < k$, the sink of $L_j$ is identified with the source of $L_{j+1}$,
\item finally, the sink of $L_k$ is identified with the root of a rooted graph in $\Ce^{\bullet}$.
\end{itemize}
The probability that this event occurs tends to $\prod_{j=1}^k q_{\Le}(L_j)$ as $n \to \infty$.
\end{lemma}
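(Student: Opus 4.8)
The plan is to follow the proof of Lemma~\ref{lem:blockshape} line by line, replacing exponential generating functions by ordinary ones and the labelled weight $\ell(L)\rho^{|L|}/|L|!$ by the cycle-index weight $Z_L(\rho,\rho^2,\dots)$. Recall from~\eqref{eq:sing_exp_rho} and the ensuing singularity analysis that
$$[z^n]C^{\bullet}(z) \sim \tilde A\, n^{-3/2}\rho^{-n}$$
for a positive constant $\tilde A$. Since $R_n$ is uniform over the $[z^n]C^{\bullet}(z)$ rooted unlabelled graphs of order $n$ in $\cc^{\bullet}$, the probability of $\mathcal S(L_1,\dots,L_k)$ equals the number of such graphs carrying the prescribed chain structure, divided by $[z^n]C^{\bullet}(z)$. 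Everything therefore reduces to counting the graphs with that structure.

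Write $\sigma = \sum_{j=1}^{k}|L_j|$. A graph with structure $\mathcal S(L_1,\dots,L_k)$ is built by taking a rooted graph $H \in \cc^{\bullet}$ of order $n-\sigma$, identifying its root with the sink of $L_k$, chaining on $L_{k},\dots,L_1$ as prescribed, and rooting at the source of $L_1$. I claim this is a bijection onto the rooted graphs $H\in\cc^{\bullet}$ of order $n-\sigma$, so that the ordinary generating function of the class of graphs with structure $\mathcal S(L_1,\dots,L_k)$ is
$$\Big(\prod_{j=1}^{k} Z_{L_j}(z,z^2,\dots)\Big)\,C^{\bullet}(z) = z^{\sigma}\,C^{\bullet}(z),$$
where I have used that each $L_j$ is a \emph{single} fixed unlabelled link, whence $Z_{L_j}(z,z^2,\dots)=z^{|L_j|}$. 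The essential point is that, unlike the enumeration of \emph{all} links (which forced the P\'olya substitution $Z_{\cb''}(C^{\bullet}(z),C^{\bullet}(z^2),\dots)$), the links $L_1,\dots,L_k$ here are rigid and are glued only along their sources and sinks, all of which are fixed by every admissible automorphism; hence no cycle-index correction occurs at the interfaces, and an automorphism of the whole graph fixing the root restricts to an automorphism of $H$ fixing its root, and conversely. Extracting coefficients and taking the ratio then gives
$$\frac{[z^{n-\sigma}]C^{\bullet}(z)}{[z^{n}]C^{\bullet}(z)} \xrightarrow[n\to\infty]{} \rho^{\sigma} = \prod_{j=1}^{k}\rho^{|L_j|} = \prod_{j=1}^{k} Z_{L_j}(\rho,\rho^2,\dots) = \prod_{j=1}^{k} q_{\cl}(L_j),$$
using $\big((n-\sigma)/n\big)^{-3/2}\to 1$ and $\rho^{-(n-\sigma)}/\rho^{-n}=\rho^{\sigma}$, which is exactly the claimed limit.

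I expect the one genuinely new point --- and hence the main obstacle --- to be the justification of the bijection, i.e.\ that a graph with structure $\mathcal S(L_1,\dots,L_k)$ decomposes in a \emph{unique} way into the chain $L_1,\dots,L_k$ followed by $H$. Reading $L_1$ off the root amounts to selecting the block at the root that lies in the spine direction together with its sink vertex, and a priori a graph possessing nontrivial automorphisms might admit several such readings of the same link sequence, which would spoil the injectivity of the build map. In the labelled setting this never happens, because distinct labels rigidify the graph; in the present setting one argues instead that the branches of each $L_j$ are fixed subgraphs of $G$ and that the sink is forced as the unique vertex through which the remainder of the graph is attached, so that the decomposition is in fact determined, the negligible symmetric exceptions being controlled in the spirit of the ``essentially disjoint for large $n$'' remark following Lemma~\ref{lem:blockshape}. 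Once the bijection is secured, Theorem~\ref{thm:subcritical_unlabelled_rooted} follows by summing the limit $\prod_{j} q_{\cl}(L_j)$ over all sequences $L_1,\dots,L_k$ with $k>r$ that generate a prescribed $r$-neighbourhood of the root.
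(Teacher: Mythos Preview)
Your proof is correct and takes essentially the same approach as the paper: both arrive at the generating function $\prod_j Z_{L_j}(z,z^2,\ldots)\,C^{\bullet}(z)$ for the graphs in question and read off the limit $\prod_j q_{\cl}(L_j)$ by singularity analysis (your observation that $Z_{L_j}(z,z^2,\ldots)=z^{|L_j|}$ is correct and makes explicit what the paper leaves as ``simply a polynomial''). The one point where the paper is cleaner is the uniqueness-of-decomposition issue you flag: rather than claiming a bijection outright and then arguing that automorphisms of $G$ restrict to automorphisms of $H$, the paper first \emph{distinguishes} the sink of $L_k$ (so that the cycle-index product is exact by construction, since all sources and sinks are then forced to be fixed), and only afterwards observes that for large $n$ this sink lies on the path from the root to the majority of vertices and is therefore already uniquely determined --- your appeal to the ``essentially disjoint for large $n$'' remark after Lemma~\ref{lem:blockshape} is a slight misdirection, as that remark concerns overlaps between \emph{different} events $\mathcal S(\cdots)$ rather than ambiguity within one.
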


\begin{proof}
We follow the lines of the proof of Lemma~\ref{lem:blockshape}. This time, we note that the cycle index series for rooted graphs satisfying the conditions of event $\mathcal{S}(L_1,L_2,\ldots,L_k)$, with the sink of the last $2$-ended link distinguished, is given by
$$Z_{L_1}(s_1,s_2,\ldots) \cdot Z_{L_2}(s_1,s_2,\ldots) \cdots Z_{L_k}(s_1,s_2,\ldots) \cdot Z_{\cc^{\bullet}}(s_1,s_2,\ldots).$$
Each of the factors $Z_{L_i}(s_1,s_2,\ldots)$ represents one of the two-ended links (note that the root and the distinguished sink of $L_k$ have to be fixed by any automorphism, implying that this is also the case for sources and sinks of all other links, which have to lie on any path from the source of $L_1$ to the sink of $L_k$).

The actual generating function is obtained by substituting $s_i = z^i$, as mentioned earlier, which yields
$$ Z_{L_1}(z,z^2,\ldots) Z_{L_2}(z,z^2,\ldots) \cdots Z_{L_k}(z,z^2,\ldots) C^{\bullet}(z).$$
Now recall that the generating function $C^{\bullet}(z)$ has a dominant square root singularity of the form
$$C^{\bullet}(z) = a - b (1-z/\rho)^{1/2} + O(|1-z/\rho|).$$
The prefactor $Z_{L_1}(z,z^2,\ldots) Z_{L_2}(z,z^2,\ldots) \cdots Z_{L_k}(z,z^2,\ldots)$, on the other hand, is simply a polynomial and therefore analytic at $\rho$. The square root singularity is therefore inherited from $C^{\bullet}(z)$, and the coefficient of $(1-z/\rho)^{1/2}$ only changes by a factor that is equal to the value of the aforementioned polynomial at $z = \rho$. Singularity analysis immediately yields that the proportion of graphs that are counted by our generating function tends to
$$Z_{L_1}(\rho,\rho^2,\ldots) Z_{L_2}(\rho,\rho^2,\ldots) \cdots Z_{L_k}(\rho,\rho^2,\ldots) = \prod_{j=1}^k q_{\Le}(L_j),$$
which is what we wanted to obtain. We observe that for large enough $n$, the sink of $L_k$ actually becomes unique (because it lies on the path from the root to the majority of vertices), so that it is no longer necessary to artificially keep it fixed. This completes our proof.
\end{proof}

\subsection{Benjamini-Schramm Convergence}

Now we finally consider the situation that we do not choose uniformly among all rooted connected graphs of $\cc$ but rather first randomly select a connected graph in $\cc$ (without root) and then choose one of the vertices as the root, uniformly at random. Note that this slightly changes the probability measure: several vertices may belong to the same orbit and may therefore lead to identical rooted graphs. The probability of such rooted graphs (with other vertices belonging to the root orbit) is therefore increased compared to the probability model of the previous section.

Bearing this argument in mind, it is not surprising that the Benjamini-Schramm limit is similar to the limit described in Theorem~\ref{thm:subcritical_unlabelled_rooted}, but not quite the same. The probability of a certain sequence $L_1,L_2,\ldots,L_k$ to occur changes, and in fact the different links can no longer be considered as independent random variables. One numerical example to illustrate this fact: the root of a random rooted unlabelled tree has limiting probability $0.338322$ to be a leaf, while a randomly chosen vertex of a random (rooted or unrooted) unlabelled tree has limiting probability $0.438156$ to be a leaf.

\begin{theorem}\label{thm:subcritical_unlabelled_BS}
Let $\Ce$ be a subcritical family of unlabelled connected graphs, and $R_n$ a uniformly random element of $\Ce$ with $n$ vertices. Choose one of the vertices of $R_n$ uniformly at random as the root to obtain $R_n^{\bullet}$. There exists a sequence of probability measures $P_{\Le}^{(j)}$ on $\Le^j$ such that
the random rooted graph $R_n^{\bullet}$ converges weakly to a limit object that can be described as follows:
\begin{itemize}
\item The root is the source of a first $2$-ended link $L_1 \in \Le$, chosen randomly according to the probability measure $P_{\Le}^{(1)}$.
\item For every $j \in \N$, identify the sink of $L_j$ with the source of a $2$-ended link $L_{j+1} \in \Le$, where the probability of $L_{j+1}$ to be chosen is the conditional probabilty
$$\frac{P_{\Le}^{(j+1)}(L_1,L_2,\ldots,L_{j+1})}{P_{\Le}^{(j)}(L_1,L_2,\ldots,L_{j})}.$$
\item The result is an infinite chain of $2$-ended links $L_1,L_2,\ldots$.
\end{itemize}
\end{theorem}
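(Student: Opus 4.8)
The plan is to follow the proof of Lemma~\ref{lem:blockshape_unlabelled} step by step, the only new feature being that the root is now a uniformly chosen \emph{vertex} of a uniformly chosen \emph{unrooted} graph. Concretely, for every finite sequence $L_1,\ldots,L_k\in\Le$ I would show that the probability of the structural event $\mathcal{S}(L_1,\ldots,L_k)$ converges as $n\to\infty$, and simply \emph{define} $P_{\Le}^{(k)}(L_1,\ldots,L_k)$ to be the limit. Since the statement only asserts the \emph{existence} of a consistent family $P_{\Le}^{(j)}$, no closed form is required: it is enough to prove that these limits exist, that for each $k$ they sum to $1$ over $\Le^k$, and that they are consistent under forgetting the last coordinate. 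Kolmogorov's extension theorem then yields a measure on $\Le^{\N}$ whose disintegration is exactly the conditional-probability description $\Pr[L_{k+1}=\cdot\mid L_1,\ldots,L_k]=P_{\Le}^{(k+1)}/P_{\Le}^{(k)}$ in the statement, and local (weak) convergence follows as in the paragraph after Lemma~\ref{lem:blockshape}, since every $r$-neighbourhood of the root is generated by finitely many links.

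First I would fix the measure. Writing $m_n$ for the number of unlabelled connected graphs of order $n$ in $\cc$, the present model puts total mass $n\,m_n$ on the set of (graph, vertex) pairs, so that $\Pr[\mathcal{S}(L_1,\ldots,L_k)]=N_n/(n\,m_n)$, where $N_n$ is the number of pairs $(G,v)$ for which rooting $G$ at $v$ displays the chain $L_1,\ldots,L_k$. The essential difference from Theorem~\ref{thm:subcritical_unlabelled_rooted} is that here each genuine vertex is counted, rather than each rooted isomorphism class; equivalently $N_n=\sum_{(G,v)}|\mathrm{orbit}_G(v)|$, the sum running over the same rooted classes enumerated in Lemma~\ref{lem:blockshape_unlabelled} but now weighted by the size of the orbit of the root in the \emph{unrooted} graph.

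The crux is to turn this orbit-weighted count into generating functions and read off its asymptotics, and this is exactly where the fixed-point-marked series of~\eqref{eq:bivariate_expansion} is designed to be used. Selecting a uniformly random vertex amounts to marking vertices with full multiplicity, which in cycle-index terms is governed by the fixed-point variable $s$ in $Z_{\cc^{\bullet}}(s,z^2,z^3,\ldots)$: by the subcriticality conditions, the substitutions $s_i=z^i$ for $i\ge 2$ produce only contributions that are analytic at $\rho$ (their radii of convergence exceeding $\rho$), so the dominant singularity is carried entirely by $s$. I would therefore express $N_n$ as a coefficient of a series in which the $k$ links contribute their cycle indices $\prod_{j} Z_{L_j}(z,z^2,\ldots)$ and the part beyond the sink of $L_k$ contributes the fixed-point-marked bulk series, and then apply singularity analysis to both $N_n$ and the normalisation $n\,m_n$ using~\eqref{eq:bivariate_expansion} and~\eqref{eq:sing_exp_rho}. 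This produces the limit $P_{\Le}^{(k)}(L_1,\ldots,L_k)$; note that, in contrast to $\prod_j q_{\Le}(L_j)$ in the rooted case, the vertex-marking does not split as an independent factor across the links, which is what forces the conditional-probability form of the limit object.

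The main obstacle is precisely this bookkeeping: one has to encode the orbit-size weight faithfully through the fixed-point series, keep track of how the marked vertex interacts with the automorphisms of the individual links and of the bulk, and confirm that in the limit only the $s$-singularity of~\eqref{eq:bivariate_expansion} survives. Everything after the limits have been shown to exist is soft. Consistency, $\sum_{L_{k+1}}P_{\Le}^{(k+1)}(L_1,\ldots,L_{k+1})=P_{\Le}^{(k)}(L_1,\ldots,L_k)$, and the normalisation $\sum_{L_1}P_{\Le}^{(1)}(L_1)=1$ follow from the fact that, for $n$ large, the events $\mathcal{S}(L_1,\ldots,L_{k+1})$ with $L_{k+1}$ ranging over $\Le$ partition $\mathcal{S}(L_1,\ldots,L_k)$, exactly the argument used to deduce Theorem~\ref{thm:subcritical_unlabelled_rooted} from Lemma~\ref{lem:blockshape_unlabelled}. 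Kolmogorov extension and the local-convergence argument then complete the proof.
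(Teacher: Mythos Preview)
Your plan has the right overall shape, but there are two genuine gaps.

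First, the generating-function bookkeeping you sketch does not actually encode the orbit weight. You propose to write $N_n$ as the coefficient of a product $\prod_j Z_{L_j}(z,z^2,\ldots)$ times a ``fixed-point-marked bulk series'' for the part beyond the sink of $L_k$. But the vertex being marked (the uniformly chosen root) is the \emph{source} of $L_1$, not a vertex of the bulk, and the orbit whose size you need is its orbit under the automorphism group of the entire \emph{unrooted} graph $G$; this does not decompose as a product over the links and the remainder. Indeed, a product of polynomial link factors times a single bulk series would yield a factorised limit as in Lemma~\ref{lem:blockshape_unlabelled}, contradicting your own observation that the limit does not factor. The paper circumvents this by counting from the other end: it fixes the graph $H$ obtained by concatenating $L_1,\ldots,L_k$, roots it at the \emph{sink} of $L_k$, and counts occurrences of $H$ as a \emph{fringe subgraph} of a rooted graph in $\cc^{\bullet}$. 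This converts the orbit-weighted vertex count into an honest subgraph count, and the associated series $R_{H,u}(s,z,1)$ is analysed by differentiating a functional equation in $u$ and comparing with the $s$- and $z$-derivatives of $R(s,z)$; the expansion~\eqref{eq:bivariate_expansion} then enters through the common denominator $1-R\,Z_{\cb',1}$ in~\eqref{eq:partial_deriv}. Only after that does integration in $s$ pass from rooted to unrooted graphs.

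Second, and more seriously, your claim that the normalisation $\sum_{L_1,\ldots,L_k} P_{\Le}^{(k)}=1$ is ``soft'' and follows from a partition argument is wrong. The partition gives $\sum_{L_1,\ldots,L_k} \Pr_n[\mathcal{S}(L_1,\ldots,L_k)]=1$ for each fixed large $n$, but to conclude $\sum_{L_1,\ldots,L_k}\lim_n \Pr_n[\cdot]=1$ you must interchange an infinite sum with the limit, i.e.\ prove \emph{tightness}: that no mass escapes to link sequences of unbounded size. This is precisely the step the paper isolates as the crux. It is handled by introducing auxiliary generating functions $U^{(k)}(z,u)$ and $T^{(k)}(z)=\partial_u U^{(k)}|_{u=1}$ that count vertices of level less than $k$, proving by induction on $k$ (via the recursion for $U^{(k)}$) that $T^{(k)}$ has a square-root singularity of the same type as $C^{\bullet}$, and deducing $n[z^n]C^{\bullet}(z)-[z^n]S^{(k)}(z)=O([z^n]C^{\bullet}(z))$. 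Without this estimate, or an equivalent one, your argument does not close.
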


\begin{proof}
In order to obtain the correct Benjamini-Schramm limit of unrooted elements of $\cc$ from which a vertex is selected at random, we consider rooted elements first, but the root in this context is not actually the randomly chosen vertex where we want to find the local weak limit. We rather use a counting approach, where we determine how often a certain sequence of links occurs at the fringe of an element of $\cc^{\bullet}$, as we did in the proof of Theorem~\ref{thm:subcritical_labelled_almostsure}. Let us make this precise.

As in the labelled setup, consider a graph $H \in \cc^{\bullet}$ with the property that its root belongs to only one block. We say that $H$ occurs at the fringe of a graph $G \in \cc^{\bullet}$ 
if there exists a subgraph $K$ of $G$ that is isomorphic to $H$ when an appropriate vertex $x$ of $K$ is chosen as a root, no vertex of $K$ other than $x$ has neighbours outside of $K$, and the root of $G$ does not lie in $K \setminus \{x\}$. 

In this case, we call $H$ a fringe subgraph of $G$, and $K$ an occurrence of $H$. As before, we also define occurrences at the fringe of unrooted graphs by the same condition, without the restriction that the root may not be part of $K \setminus \{x\}$.

As a first step, we define a generating function for counting occurrences of a fixed graph $H$ at the fringe of a rooted graph. This count can be incorporated quite easily in the cycle index series: let us write $Z_{\cc^{\bullet},H}$ for the cycle index series of $\cc^{\bullet}$ with an additional variable $u$ whose exponent counts the number of occurrences of $H$. Equation~\eqref{eq:cycle_index_series} becomes, in analogy to~\eqref{eq:funct_eq_CH},
\begin{multline*}
Z_{\cc^{\bullet},H}(s_1,s_2,\ldots;u) \\
= s_1 \exp \Big( \sum_{i \geq 1} \frac{1}{i} Z_{\cb'}  \big(Z_{\cc^{\bullet}}(s_i,s_{2i},\ldots;u^i),Z_{\cc^{\bullet}}(s_{2i},s_{4i},\ldots;u^{2i}), \ldots \big) \\ 
+ \sum_{i \geq 1} \frac{1}{i} (u^i-1) Z_H(s_i,s_{2i},\ldots) \Big).
\end{multline*}
Again, this simply stems from the fact that the number of occurrences of $H$ in the different branches (that are joined at the root) are added, and the count is increased whenever one or more of the branches are isomorphic to $H$. We will only need a special case of the cycle index series, where the variable $s_1 = s$ is kept, while $s_i$ is replaced by $z^i$ for all $i > 1$. Let us denote this auxiliary function by
$R_H(s,z,u)$, and set $R(s,z) = R_H(s,z,1)$, which does not actually depend on $H$; note that the generating function of $\cc^{\bullet}$ is $C^{\bullet}(z) = R(z,z)$. The function $R_H(s,z,u)$ satisfies the functional equation
\begin{align*}
R_H(s,z,u) &= s \exp \Big( Z_{\cb'}  \big(R_H(s,z,u),R_H(z^{2},z^{2},u^{2}), \ldots \big) \\
&\quad + \sum_{i > 1} \frac{1}{i} Z_{\cb'}  \big(R_H(z^i,z^i,u^i),R_H(z^{2i},z^{2i},u^{2i}), \ldots \big) \\ 
&\quad + (u-1) Z_H(s,z^2,z^3,\ldots) + \sum_{i > 1} \frac{1}{i} (u^i-1) Z_H(z^i,z^{2i},\ldots) \Big).
\end{align*}
We are actually only interested in the total number of occurrences of $H$, which can be obtained from the partial derivative with respect to $u$, evaluated at $u=1$. Let us write $R_{H,u}(s,z,1)$ for this partial derivative and $Z_{\cb',k}$ and $Z_{H,k}$ for the derivative of cycle indices with respect to the $k$-th variable. Logarithmic differentiation gives us
\begin{align*}
\frac{R_{H,u}(s,z,1)}{R_H(s,z,1)} &= R_{H,u}(s,z,1) Z_{\cb',1}  \big(R_H(s,z,1),R_H(z^{2},z^{2},1),\ldots \big) \\
&\quad +  \sum_{k > 1} k R_{H,u}(z^k,z^k,1) Z_{\cb',k}  \big(R_H(s,z,1),R_H(z^{2},z^{2},1),\ldots \big) \\
&\quad + \sum_{i > 1} \sum_{k \geq 1}  k R_{H,u}(z^{ki},z^{ki},1) Z_{\cb',k}  \big(R_H(z^i,z^i,1),R_H(z^{2i},z^{2i},1),\ldots \big) \\
&\quad + Z_H(s,z^2,z^3,\ldots) + \sum_{i > 1} Z_H(z^i,z^{2i},\ldots).
\end{align*}
We compare this to the partial derivatives of $R_H(s,z,1) = R(s,z)$ with respect to $s$ and $z$ to show that they have the same type of singularity, and in the same position: we have
$$\frac{R_{s}(s,z)}{R(s,z)} = \frac{1}{s} + R_{s}(s,z) Z_{\cb',1}  \big(R(s,z),R(z^{2},z^{2}),\ldots \big),$$
and
\begin{align*}
\frac{R_{z}(s,z)}{R(s,z)} &= R_{z}(s,z) Z_{\cb',1}  \big(R(s,z),R(z^{2},z^{2}),\ldots \big) \\
&\quad + \sum_{k > 1} kz^{k-1} (R_{s}(z^k,z^k) + R_{z}(z^k,z^k)) Z_{\cb',k} \big(R(s,z),R(z^{2},z^{2}),\ldots \big) \\
&\quad + {\sum_{i > 1} \sum_{k \geq 1}} kz^{ki-1} (R_{s}(z^{ki},z^{ki},1) + R_{z}(z^{ki},z^{ki},1)) \\
&\qquad \cdot Z_{\cb',k} \big(R(z^i,z^i),R(z^{2i},z^{2i}),\ldots \big).
\end{align*}
Solving the equations, we find that
\begin{align}
R_{H,u}(s,z,1) &= \frac{A^{(u)}(s,z)R(s,z)}{1 - R(s,z) Z_{\cb',1}  \big(R(s,z),R(z^{2},z^{2}),\ldots \big)}, \nonumber \\
R_{H,s}(s,z,1) &= R_s(s,z) = \frac{A^{(s)}(s,z)R(s,z)}{1 - R(s,z) Z_{\cb',1}  \big(R(s,z),R(z^{2},z^{2}),\ldots \big)}, \label{eq:partial_deriv} \\
R_{H,z}(s,z,1) &= R_z(s,z) = \frac{A^{(z)}(s,z)R(s,z)}{1 - R(s,z) Z_{\cb',1}  \big(R(s,z),R(z^{2},z^{2}),\ldots \big)}, \nonumber
\end{align}
for certain functions $A^{(u)},A^{(s)},A^{(z)}$ that are still analytic around the singularity $(s,z) = (\rho,\rho)$. Note that $sR_{s}(s,z) + zR_{z}(s,z)$ is simply the generating function for $\cc^{\bullet}$ where each graph gets an additional weight equal to the number of vertices. Now we need the fact (established in \cite{drmota11}, see~\eqref{eq:bivariate_expansion} earlier) that $R(s,z)$ has a singular expansion of the form
$$a(s,z) + b(s,z) (1-s/\rho(z))^{1/2},$$
so it follows that $R_{H,u}(s,z,1),R_{s}(s,z),R_{z}(s,z)$ all have singular expansions of the form
$$a(s,z) + b(s,z) (1-s/\rho(z))^{-1/2}.$$
In order to take the step from rooted graphs to unrooted graphs, we use the fact that the cycle index series $Z_{\ch}$ of a family $\ch$ of graphs and the cycle index series of the derived class $\ch'$ are related by $Z_{\ch'}(s_1,s_2,\ldots) = \frac{\partial}{\partial s_1} Z_{\ch}(s_1,s_2,\ldots)$. Therefore, if $Q(s,z) = Z_{\cc}(s,z^2,z^3,\ldots)$ is the (reduced) cycle index series for the class of unrooted connected graphs in $\cg$, we have
\begin{equation}\label{eq:integrated}
Q(s,z) = Q(0,z) + \int_0^s \frac{R(w,z)}{w}\,dw
\end{equation}
and
$$s Q_s(s,z) + z Q_z(s,z) = z Q_z(0,z) + \int_0^s \Big(R_{s}(w,z) + \frac{zR_{z}(w,z)}{w} \Big)\,dw,$$
the latter stemming from the aforementioned fact that the operator $s \frac{\partial}{\partial s} + z \frac{\partial}{\partial z}$ simply induces a factor $n$ for every graph of order $n$. Now we want an analogous generating function taking occurrences of $H$ as fringe subgraphs into account. Recall that $H$ occurs on the fringe in an unrooted graph if this is the case for some choice of root. If we choose a root in a given unrooted graph, the number of occurrences of $H$ on the fringe in the rooted version may not be the same as in the unrooted graph and may also depend on the choice of root. This is due to the fact that an occurrence of $H$ is not counted in the rooted graph if the root is part of it. However, the influence of this effect is bounded (cf.~Step 5 in the proof of Theorem~\ref{thm:subcritical_labelled_almostsure}). Thus the integral
$$Q_H(s,z) = \int_0^s \frac{R_{H,u}(w,z,1)}{w}\,dw$$
analogous to~\eqref{eq:integrated} is a generating function in which each graph of $\cc$ is weighted with the number of occurrences of $H$ up to $O(1)$ (the term analogous to $Q(0,z)$ in~\eqref{eq:integrated} even contributes less by the assumptions on a subcritical class). Note, however, that the coefficients of $Q_H(s,z)$ are not actually precise counts and generally not even integers.

Since the function $Q_H(s,z)$ has the same kind of singular expansion as the generating function $sQ_s(s,z) + zQ_z(s,z)$, in which graphs are simply weighted by the number of vertices, we can conclude by singularity analysis that the average number of occurrences of a fixed rooted graph $H$ on the fringe of a randomly chosen unrooted graph $G$ of order $n$ in $\cc$ is $\mu_H n + O(1)$ for some constant $\mu_H$ that only depends on $H$ and the specific subcritical class. It can be written as
$$\mu_H = \frac{A^{(u)}(\rho,\rho)}{\rho(A^{(s)}(\rho,\rho)+A^{(z)}(\rho,\rho))},$$
with $A^{(u)},A^{(s)},A^{(z)}$ as in~\eqref{eq:partial_deriv}. The same is true, with the same constant $\mu_H$, for random rooted graphs.

Now we put the count of subgraphs on the fringe into our context, as in the proof of Theorem~\ref{thm:subcritical_labelled_almostsure}: we say that a vertex $w$ in a graph $G$ has a $k$-block neighbourhood given by a chain of links $L_1,L_2,\ldots,L_k$, if there exists a cutvertex $v$ in $G$ such that the component of $G \setminus v$ that contains $w$, together with the vertex $v$, forms a subgraph $K$ of $G$ isomorphic to the chain of links $L_1,L_2,\ldots,L_k$, with $w$ as the source of $L_1$ and $v$ as the sink of $L_k$. If the graph $G$ is rooted and its root does not belong to $K$, then we can regard $K$ as a fringe subgraph of $G$. Conversely, if $H$ is the graph formed by joining links $L_1,L_2,\ldots,L_k$, rooted at the sink of $L_k$, then every occurrence of $H$ as a fringe subgraph means that the vertex corresponding to the source of $L_1$ (or any vertex lying in the same orbit of $H$) has a $k$-block neighbourhood that is exactly given by the chain of links $L_1,L_2,\ldots,L_k$.

So to count the number of vertices with a specified $k$-block neighbourhood around them, we can count the number of occurrences of a certain fringe subgraph (weighted by the size of the respective orbit). Consequently, the mean number of occurrences of $H$ in a random graph $R_n \in \cc$ of order $n$, multiplied by the size of the source's orbit and divided by $n$, provides us with the probability that a randomly selected vertex of $R_n$ has a $k$-block neighbourhood given by the links $L_1,L_2,\ldots,L_k$. From the considerations above, we know that the mean number of occurrences of any given fringe subgraph is linear in $n$, which means that this probability tends to a constant. If we can show that the sum of all constants obtained in this way, summed over all choices of $L_1,L_2,\ldots,L_k$, is actually $1$, then these constants provide us with the desired limiting probabilities $P_{\Le}^{(k)}(L_1,L_2,\ldots,L_k)$ (we need tightness, i.e.~to guarantee that no probability mass ``escapes to infinity'').

To this end, consider rooted graphs again. We define the level of a vertex $v$ as the number of blocks that are traversed in a path from the root to $v$ (e.g., the root's level is $0$, all other vertices in blocks that contain the root belong to level $1$, etc.). For every vertex $v$ whose level is at least $k$, there exists a fringe subgraph that contains $v$ and that forms a $k$-block neighbourhood for $v$ (it consists of the first $k$ blocks traversed on a path from $v$ to the root and all vertices for which a path from the root has to use an edge in at least one of these $k$ blocks). Now we take the sum of all generating functions $R_{H,u}(z,z,1)$, each weighted by the number $\omega(H)$ of possible sources that turn $H$, with the sink at its root, into a chain of $k$ links, and call this sum $S^{(k)}(z)$. Each pair of a graph in $\cc^{\bullet}$ and a vertex whose level is at least $k$ gets counted exactly once by $S^{(k)}(z)$, since exactly one fringe subgraph $H$ (that represents the $k$-block neighbourhood around the vertex) corresponds to it. Thus we have
\begin{align*}
\sum_{L_1,L_2,\ldots,L_k} &P_{\Le}^{(k)}(L_1,L_2,\ldots,L_k) \\
&= \sum_{H} \omega(H) \mu_H = \sum_{H} \omega(H) \Big( \lim_{n \to \infty} \frac{[z^n] R_{H,u}(z,z,1)}{n [z^n] C^{\bullet}(z)}\Big) \\
&= \lim_{n \to \infty} \frac{[z^n] S^{(k)}(z)}{n [z^n] C^{\bullet}(z)} = 1 - \lim_{n \to \infty} \frac{n [z^n] C^{\bullet}(z) - [z^n] S^{(k)}(z)}{n [z^n] C^{\bullet}(z)}.
\end{align*}
So it remains to show that $n [z^n] C^{\bullet}(z) - [z^n] S^{(k)}(z) = O([z^n] C^{\bullet}(z))$. Note that $n [z^n] C^{\bullet}(z) - [z^n] S^{(k)}(z)$ counts graphs in $\cc^{\bullet}$ weighted by the number of vertices whose level is less than $k$ (those not counted by $S^{(k)}(z)$ according to the aforementioned argument). We write $T^{(k)}(z)$ for the associated generating function, which can be expressed as $z \frac{\partial}{\partial z} C^{\bullet}(z) - S^{(k)}(z)$; we will prove that it has a singularity of the same order as $C^{\bullet}(z)$ at $z = \rho$. The required estimate then follows immediately.

Let us define another auxiliary generating function $U^{(k)}(z,u)$ for graphs in $\cc^{\bullet}$, where the exponent of $u$ indicates the number of vertices whose level is less than $k$. Note that
$$C^{\bullet}(z) = R(z,z) = U^{(k)}(z,1)\quad \text{and}\quad T^{(k)}(z) = \frac{\partial}{\partial u} U^{(k)}(z,u) \Big|_{u=1}$$
by definition. Clearly, $U^{(1)}(z,u) = u C^{\bullet}(z)$ is just the ordinary generating function for $\cc^{\bullet}$ with an additional factor $u$ that stands for the root (the only level $0$ vertex). Moreover, the recursive decomposition of $\cc^{\bullet}$ gives us
$$U^{(k+1)}(z,u) = uz \exp \Big( \sum_{i \geq 1} \frac{1}{i} Z_{\cb'}  (U^{(k)}(z^i,u^i),U^{(k)}(z^{2i},u^{2i}), \ldots) \Big).$$
 Consequently,
\begin{align*}
T^{(k+1)}(z) &= \frac{\partial}{\partial u} U^{(k+1)}(z,u) \Big|_{u=1} \\
&= U^{(k+1)}(z,1) \Big(1 + \sum_{i \geq 1} \sum_{\ell \geq 1} \ell Z_{\cb',\ell}  (U^{(k)}(z^i,1),U^{(k)}(z^{2i},1), \ldots) \frac{\partial}{\partial u} U^{(k)}(z^i,u) \Big|_{u=1} \Big) \\
&= C^{\bullet}(z) \Big( 1 + \sum_{i \geq 1} \sum_{\ell \geq 1} \ell Z_{\cb',\ell}  (C^{\bullet}(z^i),C^{\bullet}(z^{2i}), \ldots) T^{(k)}(z^i) \Big).
\end{align*}
It now follows from a straightforward induction argument  that $T^{(k)}(z)$ has a square root singularity of the same kind as $C^{\bullet}(z)$ (namely~\eqref{eq:sing_exp_rho}), which implies that the coefficients of $T^{(k)}$ are bounded by a multiple of those of $C^{\bullet}$ for every fixed $k$. This completes our proof.
\end{proof}

In analogy to Theorem~\ref{thm:subcritical_labelled_almostsure}, we have even stronger (almost sure) convergence as mentioned in the introduction. We omit the technical details.

\begin{theorem}\label{thm:subcritical_unlabelled_almostsure}
Let $\Ce$ be a subcritical family of unlabelled connected graphs, and $R_n$ a uniformly random element of $\Ce$ with $n$ vertices. 
Then the sequence $R_1,R_2,\ldots$ converges almost surely in the Benjamini-Schramm sense to the random infinite graph described in Theorem~\ref{thm:subcritical_unlabelled_BS}.
\end{theorem}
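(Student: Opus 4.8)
The plan is to mirror the proof of Theorem~\ref{thm:subcritical_labelled_almostsure}, replacing the labelled bivariate generating function $C_H^{\bullet}(z,u)$ by its unlabelled cycle-index counterpart, which has essentially already been set up in the proof of Theorem~\ref{thm:subcritical_unlabelled_BS}. Concretely, for each rooted graph $H \in \cc^{\bullet}$ whose root lies in a single block, recall the function $R_H(s,z,u)$ counting fringe occurrences of $H$ via the variable $u$. Setting $s=z$ yields the series $R_H(z,z,u)$, whose coefficient ratio $[z^n]R_H(z,z,u)/[z^n]C^{\bullet}(z)$ is the probability generating function of the number $F_H(R_n^{\bullet})$ of fringe occurrences of $H$ in a uniformly random rooted graph of order $n$ (recall $C^{\bullet}(z)=R_H(z,z,1)$).

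First I would establish that $R_H(z,z,u)$ has a dominant square-root singularity $\rho_H(u)$ depending analytically on $u$ in a neighbourhood of $u=1$, with $\rho_H(1)=\rho$. This is where subcriticality enters: in the functional equation for $R_H$, the series $R_H(s,z,u)$ appears as the first argument of $Z_{\cb'}$, while all remaining terms (those indexed by $i>1$, together with the $H$-counting term $Z_H$) involve $z^i$ and $u^i$ and are therefore analytic and subdominant at $z=\rho$. Hence the situation reduces to a single smooth fixed-point equation perturbed analytically by the parameter $u$, and the machinery of \cite[Theorems 2.21 and 2.22]{Drmota2009Random} applies exactly as in the labelled case. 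This gives the uniform singularity expansion
$$[z^n] R_H(z,z,u) = \alpha_H(u)\, n^{-3/2}\, \rho_H(u)^{-n}\,\big(1 + O(n^{-1})\big)$$
for $u$ near $1$, placing us in the quasi-power regime and yielding the Gaussian tail bound
$$\Pr\big(|F_H(R_n^{\bullet}) - \mu_H n| \geq t\sqrt{n}\big) \leq c_1 e^{-c_2 t^2},$$
with $\mu_H$ the constant already computed in the proof of Theorem~\ref{thm:subcritical_unlabelled_BS}.

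With the tail bound in hand, taking $t=n^{1/4}$ and applying the Borel--Cantelli lemma gives, for each fixed $H$, the almost sure limit $F_H(R_n^{\bullet})/n \to \mu_H$. As in Step~5 of the labelled proof, discarding the root and re-rooting at a uniformly random vertex changes each count by at most $O(1)$, so the same almost sure limit holds for the unrooted graph $R_n$. Using the dictionary between fringe subgraphs and $k$-block neighbourhoods established in the proof of Theorem~\ref{thm:subcritical_unlabelled_BS}, the empirical fraction of vertices of $R_n$ whose $k$-block neighbourhood is a prescribed chain $L_1,\ldots,L_k$ then converges almost surely to the constant $P_{\Le}^{(k)}(L_1,\ldots,L_k)$ identified there. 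Since there are only countably many candidate fringe subgraphs $H$, I would intersect the corresponding almost sure events to obtain a single event of full probability on which all these empirical frequencies converge simultaneously; on this event $R_n$ converges in the Benjamini--Schramm sense to the limit of Theorem~\ref{thm:subcritical_unlabelled_BS}. The tightness required to ensure that these limits exhaust the full probability mass is already supplied by the $T^{(k)}/S^{(k)}$ estimate in that proof.

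The main obstacle I expect is the second step: verifying, in the unlabelled setting, that the bivariate cycle-index equation genuinely satisfies the hypotheses of the quasi-power and analytic-perturbation theorems \emph{uniformly} in $u$. Unlike the clean exponential recursion of the labelled case, here $u$ enters through the infinitely many substituted copies $R_H(z^i,z^i,u^i)$ and through the cycle index $Z_H(s,z^2,z^3,\ldots)$, so one must check that these contributions remain jointly analytic in $(z,u)$ near $(\rho,1)$ and perturb neither the location nor the square-root nature of the dominant singularity. Once this analytic dependence is secured, the remaining steps are routine adaptations of the labelled argument, which is why the technical details can reasonably be omitted.
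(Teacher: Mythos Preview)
Your proposal is correct and follows exactly the route the paper intends: the paper states only that the proof is ``in analogy to Theorem~\ref{thm:subcritical_labelled_almostsure}'' and omits the technical details, and your sketch fills these in precisely by combining the bivariate cycle-index function $R_H(s,z,u)$ from the proof of Theorem~\ref{thm:subcritical_unlabelled_BS} with the quasi-power machinery of \cite[Theorems~2.21 and~2.22]{Drmota2009Random}, then running Borel--Cantelli as in Steps~3--5 of the labelled argument.

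One point deserves a word of caution. In Step~5 of the labelled proof one uses that forgetting the root of a uniformly random \emph{labelled} rooted graph yields a uniformly random labelled unrooted graph; this fails in the unlabelled world, where the induced measure on unrooted graphs is biased by the number of root orbits. Your sentence ``discarding the root \ldots\ changes each count by at most $O(1)$'' is true for the fringe count $F_H$, but does not by itself justify transferring the almost-sure statement to the genuine uniform unrooted model $R_n$. The fix is painless, however: since each $n$-vertex unrooted graph has between $1$ and $n$ rooted representatives, the uniform unrooted probability of any event is at most $n$ times its probability under the forgotten-root measure, and the exponential tail bound $c_1 e^{-c_2 t^2}$ with $t=n^{1/4}$ absorbs this polynomial factor, so Borel--Cantelli still applies.
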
 

\section{Generalisation of BS-convergence with infinite degrees} \label{secInfDeg}
\newcommand{\CG}{\mathbb{G}}
\newcommand{\CGc}{\mathbb{G}^*}

In this section we define a pseudometric $d$ on the set $\CG$ of (isomorphism classes of) countable rooted graphs, and we prove that the quotient space of  $\CG$  with respect to the `$d=0$' relation is compact. By concidering the weak topology of probability measures on that space we obtain a generalisation of BS-convergence, the limit objects of which can be graphs with vertices of infinite degree.

\subsection{Preliminaries}
We will write $V(G), E(G)$ for the vertex set and edge set of a graph $G$ respectively.

If $(G,o)$ is a rooted graph, then a \defi{rooted subgraph} of $(G,o)$ is a rooted graph $(R,o)$ such that $R$ is a subgraph of $G$ (containing $o$). An \defi{induced} subgraph $H$ of $G$ is a subgraph that contains all edges $xy$ of $G$ with $x,y\in V(H)$. Note that an induced subgraph is uniquely determined by its vertex set. The subgraph of $G$ \defi{spanned} by a vertex set $S\subseteq V(G)$ is the induced subgraph of $G$ with vertex set $S$.

We will say that two rooted graphs $(G,o)$
and $(H,r)$ are \defi{isomorphic}, if there is a map $f: V(G) \to V(H)$ \st\ $f(o)=r$ and $f(x)f(y)\in E(H)$  \iff\ $xy \in E(G)$. Such an $f$ will be called an \defi{isomorphism}.

The \defi{neighbourhood metric} $d_N$ on the space of isomorphism classes of rooted locally finite graphs is defined as follows. Let
\begin{align}
	r_N(G,H):=  \sup \{r \mid & B_r(G) \isom B_r(H) \},
\end{align}
where $B_r$ denotes the ball of radius $r$ around the root, and $\isom$ is the isomorphism relation. Let $d_N(G,H):= 1/r_N(G,H)$. Recall that BS-convergence can be defined using the weak convergence of probability measures with respect to $d_N$ (\Sr{SecBS}).

\subsection{A compact pseudometric on the space of countable rooted graphs}

We proceed with the definition of the aforementioned pseudometric $d$.

A \defi{rooted connected induced subgraph}, or \defi{RCIS} for short, of a rooted graph $(G,o)$, is a connected, induced subgraph of $G$ that is also rooted at $o$ (in particular, it must contain $o$).

\begin{definition} \label{defd}
Given two rooted graphs $G,H$, we define their \defi{radius of similarity} $r(G,H)$ by 
\begin{align*} \label{rGH}
	r(G,H):=  \sup \{r \mid & \text{  every rooted graph on $r$ vertices is isomorphic to a }\\ & \text{ RCIS of $G$ \iff\ it is isomorphic to a RCIS of $H$} \}.
\end{align*}


We define our pseudometric $d$ by letting $d(G,H):= 1/r(G,H)$. 
\end{definition}

Note that we may have $r(G,H) =\infty$, in which case we have $d(G,H):=0$. This is the case when $G,H$ are isomorphic, but it can also happen for non-isomorphic infinite graphs; we provide such examples later on. The choice of the function $1/r(G,H)$ is not important; we can instead choose any decreasing function $f:\N \cup \{\infty\} \to [0,\infty)$ such that $\lim_{n\to \infty} f(n) =0 $ and $f(\infty) = 0$ and let $d(G,H):= f(r(G,H))$ to induce an equivalent topology on our space. 

It is straightforward to check that $d$ satisfies the triangle inequality; even more, similarly to the neighbourhood metric for locally finite graphs, it is an ultra-metric on the set of equivalence classes \wrt\ the relation $d(G,H)=0$. Indeed, this follows immediately from the fact that having radius of similarity $r$ is an equivalence relation for any $r\in\N$ by the definition of $r(G,H)$.

It is a good exercise to check that any sequence \seq{G} of rooted graphs that converges \wrt\ the neighbourhood metric also converges \wrt\ $d$; to see this, given $r\in\N$, choose $n_0$ large enough that the $r$-balls of all $G_n, n>n_0$, are isomorphic, and note that any $r$-vertex RCIS of $G_n$ is contained in the $r$-ball of $G_n$ by the definitions. 

Note also that the star on $n$ vertices converges to an infinite star. 

Let $\CG$ denote the set of isomorphism types of countable connected rooted graphs. 
Let $\CGc$ denote the quotient of $\CG$ with respect to the equivalence relation given by $d(G,H)=0$. We will prove that the topology induced on $\CGc$ by $d$ is compact, by showing that $(\CGc,d)$ is complete and totally bounded. 

Before doing so, we provide some examples of non-isomorphic graphs $G_i,H_i$ with $d(G_i,H_i)=0$.
\begin{itemize}
\item Let $G_0$ be the Rado graph, and let $H_0$ be the join of two copies of $G_0$. (The \defi{join} of two or more rooted graphs is the graph obtained from their disjoint union by identifying their roots into a single root vertex.)
\item Let $G_1$ be the join of the paths of length $n$ \fe\ $n\in\N$, and let $H_1$ be the join of $G_1$ with one or more infinite paths.
\item Let $G_2$ be the join of the fans of order $n$ \fe\ $n\in\N$, and let $H_2$ be the (one-way or two-way) infinite fan. (A \defi{fan} is a path together with an additional root vertex that is adjacent to each vertex of the path.)
\end{itemize}

Examples of countable graphs that are only at distance $d=0$ to themselves include all finite and locally finite graphs, the complete ($k$-partite) graph, and the infinite star.

We proceed with our proof of compactness.

\begin{lemma}\label{}	
	$(\CGc,d)$ is complete.
\end{lemma}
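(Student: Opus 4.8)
The plan is to show that every Cauchy sequence in $(\CGc, d)$ converges, by constructing an explicit limit graph as an ``amalgamated union'' of the tails of the sequence. The key structural fact to exploit is that $r(G,H) \geq r$ means $G$ and $H$ realise exactly the same isomorphism types of RCISs on at most $r$ vertices. So a Cauchy sequence $(G_n)$ determines, for each $r \in \N$, a well-defined \emph{set} $\mathcal{F}_r$ of isomorphism types of rooted graphs on at most $r$ vertices, namely those that eventually (for all large $n$) appear as an RCIS of $G_n$; moreover $\mathcal{F}_r \subseteq \mathcal{F}_{r+1}$ in the obvious sense, and each $\mathcal{F}_r$ is finite since there are only finitely many rooted graphs on at most $r$ vertices. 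The limit graph should be one whose RCISs on at most $r$ vertices are precisely the members of $\mathcal{F}_r$, for every $r$.

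\medskip

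\textbf{The main steps I would carry out are as follows.} First, given a Cauchy sequence $(G_n)$ in $\CGc$, I would pass to a subsequence along which $r(G_n, G_{n+1}) \to \infty$ monotonically, so that the ``stable'' families $\mathcal{F}_r$ are realised without ambiguity; it suffices to find a convergent subsequence since the space is a (pseudo)metric space and Cauchy sequences with convergent subsequences converge. Second, I would build a limit graph $\Gam$ directly from the families $\{\mathcal{F}_r\}$. The cleanest route is to take a sufficiently generic countable rooted graph that contains each member of every $\mathcal{F}_r$ as an RCIS and contains \emph{no} other finite rooted graphs as RCISs. Concretely, one can form $\Gam$ as a join (in the sense defined in the excerpt) of countably many disjoint copies of representatives of all the ``eventual'' RCIS types, glued at the common root, taking care to realise each type as a genuine induced connected subgraph while not accidentally creating new small RCISs across different copies --- this is where the join construction is well-suited, since the only vertex shared between two summands is the root. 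Third, I would verify that $d(G_n, \Gam) \to 0$, i.e.\ that for each fixed $r$, for all large $n$ the RCISs on $r$ vertices of $G_n$ and of $\Gam$ coincide, which holds by construction once $n$ is large enough that $G_n$ has stabilised on $\mathcal{F}_r$.

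\medskip

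\textbf{The hard part will be} the construction in the second step: ensuring that the amalgamated limit graph $\Gam$ realises \emph{exactly} the eventual RCIS types and introduces no spurious ones. The subtlety is that when several finite rooted graphs are glued together at a common root, a connected induced subgraph may span vertices from more than one summand, producing a rooted graph that was not among the $\mathcal{F}_r$ --- so the naive join need not have the right RCIS profile. I would address this by checking a compatibility/consistency condition on the families $\mathcal{F}_r$ coming from the Cauchy property: since each $G_n$ is itself a connected rooted graph, its own RCIS families are automatically ``closed'' under the operation of taking connected induced rooted subgraphs containing the root, and any two overlapping RCISs of $G_n$ combine (via their union together with the connecting structure at the root) into a larger RCIS of $G_n$. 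This closure, inherited in the limit by the families $\mathcal{F}_r$, is precisely what guarantees that the join construction produces no RCIS type outside $\bigcup_r \mathcal{F}_r$. Making this closure argument precise --- identifying exactly which finite rooted graphs a join of the chosen pieces can contain as an RCIS, and matching that set to $\bigcup_r \mathcal{F}_r$ --- is the technical crux; the remaining verifications (finiteness of each $\mathcal{F}_r$, monotonicity in $r$, and the final convergence estimate) are routine consequences of the definition of $r(G,H)$.
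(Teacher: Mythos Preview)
Your identification of the ``hard part'' is exactly right, but the closure argument you propose to resolve it does not work, and the join construction genuinely fails. The issue is a mismatch between two different notions of ``combining'' RCISs. In a fixed graph $G_n$, the union of the vertex sets of two RCISs spans another RCIS of $G_n$; this is the closure you invoke. But in your limit graph $\Gamma$, the different pieces are glued \emph{only} at the root, so a connected induced subgraph of $\Gamma$ meeting several summands is isomorphic to the \emph{abstract join} of RCISs of those summands---and the abstract join of two rooted graphs that happen to embed in $G_n$ need not itself embed in $G_n$, because the two embeddings may overlap in far more than the root.

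A concrete counterexample: let $G_n$ be the path on $n$ vertices rooted at an endpoint. This is Cauchy (indeed $r(G_n,G_m)\ge\min(n,m)$), and every $\mathcal F_r$ consists precisely of the paths $P_1,\dots,P_r$. Your $\Gamma$ would be the join of paths of all lengths at a common root---an infinite ``broom''. Taking the root together with one neighbour from each of two distinct legs yields $K_{1,2}$ as a RCIS of $\Gamma$, yet $K_{1,2}$ is not a RCIS of any $G_n$. So $\Gamma$ is \emph{not} a limit of the sequence. The correct limit is the one-way infinite ray, which your construction cannot produce.

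The paper sidesteps this by never forming an abstract amalgam. Instead it builds an increasing chain $L_1\subset L_2\subset\cdots$ where each $L_j$ is (isomorphic to) an actual RCIS of some late $G_n$: one takes a \emph{minimal} RCIS $X_i$ of $G_i$ containing every type in the current $K'$, observes that $|X_i|$ is uniformly bounded, and uses the Cauchy property to stabilise its isomorphism type. Since $L_j$ sits inside some $G_n$ as a RCIS, every RCIS of $L_j$ is automatically a RCIS of $G_n$, so no spurious types can appear. The nesting $L_j\subset L_{j+1}$ (arranged by relabelling) then lets one take the union $L=\bigcup L_j$, and both directions of the convergence check go through. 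The essential idea you are missing is to realise the finite approximations \emph{inside} the $G_n$ rather than assembling them externally.
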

\begin{proof}
	Let $\seq{G}$ be a sequence of countable connected rooted graphs such that the corresponding equivalence classes of $\CGc$ form a Cauchy sequence in $(\CGc,d)$. To show completeness, we need to construct a limit graph $L$ such that $\lim_{n \to \infty} d(G_n,L) = 0$. The reader may choose to think of each $G_n$ as a finite graph, in which case $G_n$ is the only element of its equivalence class; although we do need to consider infinite $G_n$ to prove the completeness of $\CGc$, the cardinalities of the $G_n$ will make no difference for our arguments. 
	
	We will construct $L$ as a union of a sequence $\seq{L}$ of finite graphs with a common root such that $L_n \subset L_{n+1}$ \fe\ $n$.
	
	To begin with, fix an arbitrary integer $k > 2$, and let $K$ denote the set of isomorphism types of rooted connected graphs on $k$ vertices. Note that $|K|\leq 2^{k \choose 2 }$ is finite. We claim that 
	\labtequ{balls}{the number of distinct (clopen) balls of $(\CGc,d)$ of radius $1/k$ is at most $2^{|K|}$.}
	Indeed, by the definition of $d$, two graphs are at distance at most $1/k$ \iff\ they contain the same 
	RCIS's of order at most $k$, and so the number of balls of $(\CGc,d)$ of radius $1/k$ is at most the cardinality $2^{|K|}$ of the power set of $K$.
	
	Now since our sequence $\seq{G}$ is Cauchy, almost all of its members lie in one of these balls, in other words, almost all $G_i$ contain the same elements of $K$ as RCIS. Let $K'$ be the subset of $K$ comprising those graphs that appear in  almost all $G_i$, and let $i_k\in \N$ be large enough that $G_i$ contains all elements of $K'$ as RCIS for every $i>i_k$. For every $i>i_k$, let $X_i$ be a minimal RCIS of $G_i$ containing all  elements of $K'$ as RCIS. To see that $X_i$ always exists, note that a candidate can be obtained by taking the union of one RCIS of $G_i$ isomorphic to each element of $K'$. Since we are choosing $X_i$ to be minimal, its order is bounded above by $N_k= k |K|$, which is finite. Since there are finitely many isomorphism types of graphs on $N_k$ vertices, there are only finitely many possibilities for the structure of $X_i$, and so there is a subsequence \sseq{G}{A}\ of \seq{G}\ such that the $X_{i}$ are pairwise isomorphic for all $i \in A$.
	
	But since \seq{G}\ is Cauchy \wrt\ $d$, we can in fact choose the $X_i$ in such a way that for some $j_k\in \N$, the  $X_i$ are pairwise isomorphic for all $i>j_k$: indeed, for large enough $i,j$, if a graph $X$ on $N_k$ vertices is a RCIS of $G_i$ then it is also a RCIS of $G_j$ by the definition of $d$. We let $L_1$ be a rooted graph isomorphic to these $X_i$.
	
	We proceed to define $L_2, L_3, \ldots$ in a similar manner: we replace $k$ by $|L_1|$, and repeat the same construction to obtain $L_2$, and inductively, having defined $L_j$, we replace $k$ by $|L_j|$, and repeat the same construction to obtain $L_{j+1}$. Note that as $L_j$ is a RCIS of almost all $G_n$ by definition, there must be a RCIS of $L_{j+1}$ isomorphic to $L_j$. 
	
	Our limit graph $L$ will be constructed in such a way that every $L_j$ is a RCIS of $L$. For this, we label the vertices of each $L_j$ as $v_0, v_1, \ldots, v_{|L_j|}$ in such a way that $v_0$ is always the root, and the (induced) subgraph of $L_j$ spanned by $v_0, v_1, \ldots, v_{|L_{j-1}|}$ is isomorphic to $L_{j-1}$; this can be achieved by first fixing an isomorphism $f_j$ from $L_{j-1}$ to a RCIS of $L_j$, labelling each vertex in the image of $f_j$ by the same label as its preimages, and extending the labelling arbitrarily to the remaining vertices of $L_j$.
	
	We can now define $L= \bigcup L_j$ to be the graph on $v_0, v_1, \ldots$ whose-edge set is the union of the edge-sets of the $L_j$. We claim that $\seq{G}$ converges  to $L$. To see this, recall  that almost all $G_n$ lie in the same clopen ball of $(\CGc,d)$ of radius $1/k$ for every $k\in \N$, and we constructed $L$ in such a way that it also lies in that ball: indeed, any $k$-vertex graph that appears as an RCIS of infinitely many $G_n$ is a subgraph of some $L_j$, and therefore of $L$, and conversely, every  $k$-vertex RCIS of $L$ is contained is some $L_j$ and is therefore an RCIS of almost every $G_n$. This proves that $(\CGc,d)$ is complete.
\end{proof}

\begin{remark}
The limit graph $L$ is in general not uniquely determined by \seq{G}. It might depend on the choice of the isomorphisms $f_j$.
\end{remark}

Recall that a metric space is totally bounded, if for every $\eps>0$ it can be covered by finitely many balls of radius $\eps$.

\begin{lemma}\label{}	
	$(\CGc,d)$ is totally bounded.
\end{lemma}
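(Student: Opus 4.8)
The plan is to read off total boundedness almost immediately from the counting estimate~\eqref{balls} that was already established in the proof of completeness; no new hard work is needed, and the only content is bookkeeping about ultrametric balls. Fix $\eps>0$ and choose an integer $k>2$ with $1/k<\eps$. As in the completeness proof, let $K$ be the set of isomorphism types of rooted connected graphs on $k$ vertices, which is finite since $|K|\le 2^{\binom k2}$. By~\eqref{balls}, the number of distinct balls of $(\CGc,d)$ of radius $1/k$ is at most $2^{|K|}$, because—invoking the characterisation of $d$ from \Dr{defd}, namely that $r(G,H)\ge k$ \iff\ $G$ and $H$ have exactly the same RCIS of order at most $k$—membership of a graph in such a ball is completely determined by which of the finitely many elements of $K$ occur in it as a RCIS.

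Next I would exploit that $d$ is an ultrametric on $\CGc$, so that the relation ``$d(G,H)\le 1/k$'' is an equivalence relation whose classes are precisely the (finitely many, clopen) balls of radius $1/k$; these classes therefore partition $\CGc$. Picking one representative $G_1,\dots,G_m$ (with $m\le 2^{|K|}$) from each nonempty class, every $G\in\CGc$ lies in the class containing it, i.e.\ satisfies $d(G,G_i)\le 1/k<\eps$ for the representative $G_i$ of its class. Hence the finitely many balls of radius $\eps$ centred at $G_1,\dots,G_m$ cover $\CGc$. Since $\eps>0$ was arbitrary, $(\CGc,d)$ is totally bounded, and together with the preceding completeness lemma this yields compactness.

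I do not expect a serious obstacle here, as the essential finiteness input is~\eqref{balls}. The only point deserving care—what I would flag as the main (minor) subtlety—is the reduction ``same RCIS up to order $k$'' $\Rightarrow$ ``distance at most $1/k$'': one must be sure that having radius of similarity at least $k$ really is an equivalence relation (so that the radius-$1/k$ balls genuinely partition the space rather than merely cover it), which is exactly the ultrametric observation recorded after \Dr{defd}. Once that is in hand, every step is formal, and the covering by $m\le 2^{|K|}$ balls of radius $\eps$ follows.
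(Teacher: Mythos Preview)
Your proof is correct and follows essentially the same approach as the paper: invoke the finiteness estimate~\eqref{balls} to see that for $k$ large enough the space is covered by at most $2^{|K|}\le 2^{2^{\binom{k}{2}}}$ ultra-balls of radius $1/k<\eps$. The paper states this in a single sentence, while you spell out the ultrametric bookkeeping and the choice of representatives, but the content is identical.
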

\begin{proof}
	This follows immediately from \eqref{balls}: given \eps, we let $k=\ceil{1/\eps}$, and remark that $(\CGc,d)$ is covered by the at most $2^{2^{k \choose 2 }}$ ultra-balls of radius $1/k$.
\end{proof}

Since a metric space is compact \iff\ it is complete and totally bounded, we deduce

\begin{theorem}\label{compact}
	$(\CGc,d)$ is compact.
\end{theorem}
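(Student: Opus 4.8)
The plan is to invoke the classical characterisation that a metric space is compact if and only if it is both complete and totally bounded; this reduces the theorem to the two lemmas already established above. Since $d$ is a genuine metric on the quotient $\CGc$ (the relation $d=0$ having been divided out), the characterisation applies verbatim: the completeness lemma and the total boundedness lemma together yield compactness with no further work. This is exactly the deduction flagged in the sentence preceding the statement.

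For a self-contained argument one can instead prove sequential compactness directly, essentially reproving the characterisation in the present setting. Given a sequence $\seq{G}$ in $\CGc$, I would use total boundedness to extract a Cauchy subsequence by a diagonal argument: at each scale $k$ the space is covered by finitely many ultra-balls of radius $1/k$ (by~\eqref{balls}), so infinitely many terms lie in a single such ball. Passing to nested subsequences, the $k$-th lying entirely within one radius-$1/k$ ball, and taking the diagonal, the resulting subsequence has a tail contained in a single radius-$1/k$ ball for every $k$; since $d$ is an ultra-metric (so all pairwise distances inside such a ball are at most $1/k$) and $1/k \to 0$, this diagonal subsequence is Cauchy. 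Completeness then supplies a limit in $\CGc$, so every sequence has a convergent subsequence, which in a metric space is equivalent to compactness.

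The genuine content of the theorem lies entirely in the two preceding lemmas, not in their combination. Total boundedness is the easy half, being immediate from~\eqref{balls}, the bound on the number of radius-$1/k$ balls in terms of the finitely many rooted connected graphs on $k$ vertices. The real obstacle — already overcome above — is completeness, which required the explicit construction of a limit graph as an increasing union $\bigcup L_j$ of finite RCIS's matched to the eventual local structure of the Cauchy sequence. Granting both lemmas, the theorem itself presents no obstacle and follows as an immediate corollary.
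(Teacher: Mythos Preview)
Your proposal is correct and matches the paper's own argument exactly: the paper also simply invokes the characterisation of compact metric spaces as those that are complete and totally bounded, deducing the theorem immediately from the two preceding lemmas. Your additional diagonal-argument paragraph is a harmless elaboration, but the essential approach is identical.
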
 

\subsection{Convergence of random graphs}

Since $(\CGc,d)$ is a metric space, we can consider the weak topology on  the space $\mathcal M(\CGc)$ of probability distributions on $\CGc$. As  $(\CGc,d)$ is compact, so is  $\mathcal M(\CGc)$ by the standard theory. Thus every sequence of elements of $\mathcal M(\CGc)$ has a convergent subsequence with a limit in $\mathcal M(\CGc)$. Note that any probability distribution on $\CG$ that is concentrated on finite graphs can be thought of as an element of $\mathcal M(\CGc)$, therefore the last statement applies to sequences of such distributions too.

\begin{conjecture}\label{Croot}
	Let $\cs$ be a finite set of finite (connected) graphs, and $Ex^{\bullet}(\cs)$ the class of rooted (labelled or unlabelled) connected graphs with no minor in $\cs$. 
	Let $R_n$ be a uniformly random element of $Ex^{\bullet}(\cs)$ with $n$ vertices. Then $R_n$ converges weakly \wrt\ $d$.
\end{conjecture}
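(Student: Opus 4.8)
The plan is to use the compactness established in \Tr{compact} to make tightness automatic, thereby reducing the conjecture to a uniqueness-of-limit statement, and then to attack that statement with the block-decomposition and fringe-counting machinery developed for subcritical classes in \Sr{secSubcrit}.

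First, since $(\CGc,d)$ is compact, so is the space $\mathcal M(\CGc)$ of probability measures in the weak topology, as noted at the start of this subsection. Hence the laws of the $R_n$ form a tight sequence and every subsequence admits a weakly convergent sub-subsequence; no probability mass can escape to infinity. The entire content of the conjecture is therefore to show that all these subsequential limits coincide. By the description of the clopen balls of $d$ (two graphs lie in a common ball of radius $1/k$ \iff\ they have the same RCIS of order at most $k$), weak convergence \wrt\ $d$ is equivalent to the following: \fe\ $k\in\N$, the distribution of the random set of isomorphism types of rooted graphs of order at most $k$ that occur as an RCIS of $R_n$ converges as $n\to\infty$. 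Since this set ranges over the finite family $K$, it suffices to prove that for each fixed finite rooted graph $T$ the probability $\Pr[T\text{ occurs as an RCIS of }R_n]$ converges.

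Second, to establish convergence of these probabilities I would imitate the proofs of Lemma~\ref{lem:blockshape} and Lemma~\ref{lem:blockshape_unlabelled}. Whenever the excluded minors in $\cs$ are $2$-connected the class $Ex^\bullet(\cs)$ is addable and hence block-stable (see the footnote in \Sr{secSubcrit}), so the block decomposition $C^\bullet(z)=z\exp(B'(C^\bullet(z)))$ and its unlabelled cycle-index analogue \eqref{eq:cycle_index_series} remain valid. One would mark, by an auxiliary variable, the occurrence of each fixed fringe configuration realising $T$ near the root --- including configurations in which a single cutvertex carries many isomorphic branches, which is how a vertex of unbounded degree arises --- and read off $\Pr[T\text{ occurs as an RCIS of }R_n]$ from the ratio of the marked coefficient to $[z^n]C^\bullet(z)$. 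If $C^\bullet$ still had a dominant square-root singularity of the form \eqref{eq:asymp_expansion}, singularity analysis would give a limit for each such ratio exactly as in the subcritical case, and summing over the finitely many configurations of a given size would complete the argument.

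The main obstacle is precisely that a general minor-closed class need not be subcritical, so the clean singularity \eqref{eq:asymp_expansion}/\eqref{eq:sing_exp_rho} is unavailable: $C^\bullet$ may have several dominant singularities, or a singularity of a different type, and the branch-size distribution may then be heavy-tailed --- which is exactly the mechanism by which infinite-degree vertices appear in the limit, and by which ordinary BS-convergence fails. Mader's bounded-average-degree theorem controls the global edge count but not the full local profile, so it does not by itself yield the coefficient asymptotics. For classes whose excluded minors are connected but not $2$-connected, block-stability fails outright and one would have to replace the block decomposition by structural input from the Robertson--Seymour structure theorem. Converting the compactness-given subsequential convergence into genuine convergence of the RCIS profiles, in the absence of a usable singularity expansion, is the hard open core of the statement, and is why it is recorded here as a conjecture rather than a theorem.
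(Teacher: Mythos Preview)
The statement you were asked to prove is labelled \emph{Conjecture} in the paper, and the paper offers no proof of it; it is stated precisely because the authors do not know how to establish it for general minor-closed classes. So there is nothing on the paper's side to compare your attempt against, and your closing paragraph already acknowledges this: what you have written is a heuristic plan together with an honest identification of the obstruction (no usable singular expansion beyond the subcritical regime), not a proof. In that sense your write-up is accurate and well aligned with the paper's own stance.

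One genuine gap in the reduction step, though, is worth flagging. You argue that weak convergence \wrt\ $d$ reduces to showing that $\Pr[T\text{ occurs as an RCIS of }R_n]$ converges for each fixed finite rooted graph $T$. That is not sufficient: the clopen balls of $(\CGc,d)$ of radius $1/k$ are indexed by \emph{subsets} of the finite family $K$ of rooted graphs on at most $k$ vertices, so weak convergence requires convergence of the \emph{joint} distribution of the indicators $\big(\mathbf{1}\{T\text{ is an RCIS of }R_n\}\big)_{T\in K}$, not merely of each marginal. Knowing $\Pr[T_1\text{ occurs}]$ and $\Pr[T_2\text{ occurs}]$ separately does not determine $\Pr[T_1\text{ and }T_2\text{ both occur}]$. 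The fix is routine --- require convergence of $\Pr[\text{every }T\in A\text{ occurs}]$ for each finite $A\subseteq K$, and recover the full distribution by inclusion--exclusion --- but as stated the reduction is incomplete. Beyond this, your diagnosis of why the subcritical machinery does not transfer (loss of block-stability when excluded minors are not $2$-connected, absence of the square-root singularity \eqref{eq:asymp_expansion} in general) matches the paper's implicit reasoning for leaving the statement open.
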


In the bounded-degree case (and in somewhat greater generality), if we start with an unrooted ---possibly random--- graph and choose a root uniformly at random, weak convergence translates to BS-convergence. We can apply this idea in greater generality using $d$:

\begin{conjecture}\label{Cunroot}
	Let $\cs$ be a finite set of finite (connected) graphs, and $Ex(\cs)$ the class of (labelled or unlabelled) connected graphs with no minor in $\cs$. Let $G_n$ be a uniformly random element of $Ex(\cs)$ with $n$ vertices, in which a root is chosen uniformly at random among all its vertices. Then $G_n$ converges weakly \wrt\ $d$. 
\end{conjecture}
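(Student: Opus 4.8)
The plan is to leverage the compactness of Theorem~\ref{compact}. Let $\mu_n$ denote the law on $\CGc$ of the random rooted graph obtained by rooting $G_n$ at a uniformly chosen vertex $o$. Since $(\CGc,d)$ is compact, so is the space $\mathcal M(\CGc)$ of probability measures in the weak topology; hence $(\mu_n)$ is automatically tight and every subsequence has a weakly convergent sub-subsequence. The whole content of the conjecture is therefore the \emph{uniqueness} of the subsequential limit. Because the topology of $(\CGc,d)$ is generated by the clopen balls of radius $1/k$ --- within which two graphs agree exactly when they realise the same RCIS of order at most $k$ --- weak convergence of $(\mu_n)$ is equivalent to the convergence, for every finite set $T$ of finite rooted graphs, of
$$P_n(T) := \Pr\big(\text{every } F\in T \text{ occurs as a RCIS of } (G_n,o)\big).$$
(The ``profile'' events generating the Borel algebra are finite Boolean combinations of the events $\{F \text{ occurs}\}$, so tracking the $P_n(T)$ suffices.) A convenient feature is that each such event depends only on the induced neighbourhood of $o$ of bounded radius, since a connected induced subgraph on at most $k$ vertices containing $o$ lies within the $(k{-}1)$-ball of $o$.

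Next I would express $P_n(T)$ as a normalised vertex count, exactly in the spirit of the proofs of Theorems~\ref{thm:subcritical_labelled_almostsure} and~\ref{thm:subcritical_unlabelled_BS}. One first replaces the ``choose an unrooted graph, then a random root'' model by a rooted counting model (each $n$-vertex graph of $Ex(\cs)$ contributing its $n$ rootings), so that $P_n(T)$ becomes the expected proportion of vertices $v$ whose bounded induced neighbourhood realises, rooted at $v$, every member of $T$. Convergence of $P_n(T)$ then follows once one shows that the expected density of vertices carrying a prescribed finite local structure tends to a limit, together with a concentration estimate guaranteeing that the uniformly chosen root behaves like the typical vertex. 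For subcritical classes this was delivered by the square-root singularity of $C^\bullet$ and the quasi-power concentration of \cite[Theorem~2.22]{Drmota2009Random}; the general case must reproduce these two ingredients by other means.

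The essential difficulty --- and the reason this is stated only as a conjecture --- is precisely the loss of the analytic-combinatorial toolkit when $Ex(\cs)$ is not subcritical: there need be no dominant square-root singularity and no direct access to the concentration bound. To regain structural control of the local statistics I would invoke the Robertson--Seymour structure theorem, which exhibits the members of $Ex(\cs)$ as clique-sums of pieces that embed, up to a bounded number of apices and vortices, in a fixed surface, together with the fact (Mader) that graphs excluding a fixed minor have linearly many edges and hence bounded average degree. The natural strategy is to decompose a typical $G_n$ along its $2$- and $3$-connected components and argue that the empirical distribution of these components, and of the way a random vertex sits inside the decomposition, stabilises as $n\to\infty$. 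Making this stabilisation rigorous --- i.e.\ proving convergence of the density of each finite local induced structure --- is the main obstacle, and is exactly where genuinely new enumerative or probabilistic input beyond the present paper is required.

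Finally, I would stress the one thing the pseudometric $d$ renders automatic. Because $d$ records only \emph{which} finite connected induced subgraphs occur and discards the exact degrees, the high-degree vertices that obstruct ordinary Benjamini--Schramm convergence (as in the apex-forest example) cause no loss of mass here: compactness of $\CGc$ forbids any escape of probability to infinity, and a vertex of growing degree simply contributes, in the limit, a vertex of infinite degree whose RCIS are the increasing unions of the finite ones. Thus tightness is free and infinite-degree limit objects are accommodated without extra work, isolating the genuinely open part of the conjecture as the convergence of the finite local occurrence-densities discussed above.
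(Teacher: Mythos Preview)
The statement you were asked to prove is a \emph{conjecture} in the paper, not a theorem: the paper offers no proof of it, and indeed presents it explicitly as an open problem motivated by the failure of ordinary Benjamini--Schramm convergence for certain minor-closed classes. There is therefore nothing in the paper to compare your proposal against.

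Your write-up is not a proof either, and you are candid about this: you correctly isolate the one step that compactness of $(\CGc,d)$ gives for free (tightness, hence existence of subsequential limits), correctly reduce uniqueness of the limit to convergence of the RCIS-occurrence probabilities $P_n(T)$ via the clopen-ball description \eqref{balls}, and then correctly identify the missing ingredient as a replacement for the analytic-combinatorial machinery (square-root singularity, quasi-power concentration) that drove the subcritical case. Your suggestion to attack this via the Robertson--Seymour structure theorem is natural but speculative; nothing in the paper or in your outline actually establishes the required convergence of local densities for a general minor-closed class. In short, your proposal is a reasonable discussion of why the conjecture is plausible and where the difficulty lies, but it is not a proof, and neither is anything in the paper.
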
 

(For labelled graphs, this is equivalent to \Cnr{Croot}.)

\medskip
Perhaps an unsatisfactory aspect of the above conjectures is that the limit of a sequence of finite random graphs is not a random graph itself, but a random equivalence class of graphs under the $d=0$ relation. However, there is a way to translate this into a random graph: it is shown in \cite{GCcores} that for  every such equivalence class there is a uniquely determined graph that is an induced subgraph of all members of the class. This subgraph is defined as follows.

\newcommand{\gf}{ground floor}
\newcommand{\ff}{first floor}
\newcommand{\cor}{core}
Let $V_\infty(G)$ denote the set of infinite degree vertices of a graph $G$. The \defi{\gf} $G^0$ of a rooted graph $(G,o)$ is the component of $o$ in the subgraph $G - V_\infty(G)$. The \defi{\ff} $G^1$ of $G$ is the graph spanned by the vertices in $V_\infty(G)$ having a neighbour in $G^0$. The \defi{\cor} $C(G)$ of $(G,o)$ is the subgraph $G[G^0 \cup G^1]$ spanned by the vertices in the \gf\ and all their neighbours. 

It is proved in \cite{GCcores} that if $d(G,H)=0$, then the \cor s of $G$ and $H$ are isomorphic. Thus in our setup, we can consider the core of the limit equivalence class to be the random graph to which our sequence converges. Note that if $G$ is locally finite, then it is the unique element of its equivalence class and it coincides with its core; therefore, our notion of convergence generalises BS-convergence.

\comment{
	\subsection{$d$}
	
	The pseudometric $d$ has many nice properties, but it has the drawback that 
	
	Let for example $G_1$ be the join of the fans of order $n$ \fe\ $n$, and let $H_1$ be the (one-way or two-way) infinite fan.
	
	Let $G$ be the Rado graph, and let $H$ be the join of two copies of $G$.
	
	To refine $d$, we modify the definition of radius of similarity as follows. 
	
	We will say that a RCIS $R$ of $G$ \defi{surrounds} a RCIS $L$ of $H$, if there is an isomorphism  $\phi$ from $R$ to $L$ \st\ for every RCIS $R'$ of $H$  on $2r$ vertices containing $L$, there is an isomorphism $\chi$ from $R'$ to a RCIS of $G$ extending $\phi^{-1}$.
	
	Given two rooted graphs $G,H$, we let
	\begin{align*} \label{rGH2}
		r(G,H):= \sup  \{r \mid & \text{ every RCIS of $G$ on $r$ vertices surrounds a RCIS of $H$ and vice versa} \}.
	\end{align*}
	%
	
	Examples: we have $r(G_1,H_1) =1$, because if we choose $R \subset G$ to be the fan of order 1, then no matter how we map $R$ to $H_1$, if we add two vertices to its image to form a fan $R'$ of order 3, then $R'$ cannot be mapped back to a RCIS of $G_1$ containing $R$.
	
	...Rado graph is singular
	
	Our refined metric is defined by $d(G,H):= 1/r(G,H)$. Similarly to $\CGc$, we define $\CGc$ to be the set of equivalence classes of $\CG$, where two graphs $G,H$ are equivalent if $d(G,H)=0$.

	\begin{theorem}\label{compact}
		$(\CGc,d)$ is compact.
	\end{theorem} 
}

\bibliographystyle{plain}
\bibliography{collective}
\end{document}